\documentclass[11pt, a4paper]{article}

\usepackage[T1]{fontenc}
\usepackage[utf8]{inputenc}
\usepackage{xcolor} 
\usepackage{graphicx} 
\usepackage{times} 
\usepackage{amsmath} 
\usepackage{amssymb}  
\usepackage{amsthm}
\usepackage{subcaption}

\usepackage{url}
\usepackage{multirow}
\usepackage{cite}
\usepackage{algorithm}

\usepackage{algorithmic}
\usepackage{microtype} 
\renewcommand{\arraystretch}{1.3}

\definecolor{darkgreen}{rgb}{0.0,0.5,0.0}

\newcommand{\rr}{\mathbb{R}}

\newcommand{\DD}{\mathcal{D}}
\newcommand{\XX}{\mathcal{X}}
\newcommand{\LL}{\mathcal{L}}
\DeclareMathOperator{\tr}{\mathrm{tr}}

\newcommand{\QED}{\hfill\ensuremath{\square}}

\newtheorem{theorem}{Theorem}[section]
\newtheorem{lemma}[theorem]{Lemma}

\newtheorem{proposition}[theorem]{Proposition}

\newtheorem{definition}[theorem]{Definition}

\theoremstyle{remark}
\newtheorem{remark}[theorem]{Remark}
\theoremstyle{plain}

\begin{document}

\title{Learning Parametric Monotone Games}

\author{Alberto Bemporad and Tatiana Tatarenko 
{\renewcommand{\thefootnote}{}\thanks{A. Bemporad is with the IMT School for Advanced Studies, Piazza San Francesco 19, Lucca, Italy. Email: \texttt{\scriptsize alberto.bemporad@imtlucca.it}. T. Tatarenko is with the Department of Control Theory and Intelligent Systems, TU Darmstadt, Germany. E-mails: \texttt{\scriptsize tatiana.tatarenko@tu-darmstadt.de}.   
This work was funded by the European Union (ERC Advanced Research Grant COMPACT, No. 101141351). Views and opinions expressed are however those of the authors only and do not necessarily reflect those of the European Union or the European Research Council. Neither the European Union nor the granting authority can be held responsible for them. The work was also funded by the Deutsche Forschungsgemeinschaft (DFG, German Research Foundation). Project number: 528033031.
}}}

\maketitle
\thispagestyle{empty}

\begin{abstract}
We study the problem of learning from data a parametric Nash equilibrium (NE) problem that is monotone (or strongly monotone) for all parameter values. In the presence of local and shared convex constraints,
monotonicity enables efficient computation of generalized Nash equilibria of the learned game.
We consider two learning scenarios: ($i$) direct learning of the NE problem from samples of the agents' costs, and ($ii$) inverse learning of surrogate agents' costs from samples of their best responses. We propose two methods to solve these tasks. The first is a penalty-based approach that promotes monotonicity of the learned game during training. 
The second, based on a representation theorem we introduce for a broad class of monotone games, parameterizes the agents' costs so that monotonicity of the NE problem is guaranteed by construction, for all parameter values, regardless of the training data used. We illustrate the applicability of the proposed methods on several numerical examples. A Python library and the examples reported in the paper are available at \url{https://github.com/bemporad/learn_monotone_games}.
\end{abstract}

\section{Introduction}

Game theory has become a foundational modeling tool across a wide range of engineering and economic applications where multiple self-interested decision-makers interact with each other~\cite{BYGP22}. Such a non-cooperative multi-agent decision scenario is naturally captured by generalized games, in which several agents choose their decisions by optimizing individual objective functions subject to both local and shared coupling constraints on their decision variables. Such settings arise in economics \cite{Dockner_GameEco_2000}, energy management~\cite{Hall_CDC_2022,LJWA20,WLMCMWW21}, control of multi-agent systems~\cite{Cappello_TCNS_2021}, and in many other domains.
In all of these settings, the cost function and the feasible set of each agent depend on the decisions of the other agents, and the relevant solution concept is the generalized Nash equilibrium (GNE)~\cite{facchinei2010generalized,Bau16}, a state in which no agent has an incentive, or even the possibility, to unilaterally change its decision, given the decisions of the others. 

Despite this ubiquity, computing a GNE is fundamentally harder than solving a single-agent optimization problem. In centralized optimization, {\it convexity} underlies essentially all guarantees of practical interest: existence and uniqueness properties of an optimizer, convergence of different and very efficient solution methods, and tractable optimality certificates via duality~\cite{BV04}. In game-theoretic optimization, the analogous role is played by {\it monotonicity} of the game's pseudo-gradient mapping, or of the variational inequality that characterizes the equilibrium~\cite{facchinei2010generalized}. Monotonicity, and its strengthened variants such as strong or strict monotonicity, are often used as a key condition for establishing existence and uniqueness of equilibria, as well as specific convergence properties of equilibrium-seeking algorithms.
In this sense, monotonicity is to game-theoretic optimization what convexity is to classical optimization: a structural assumption allowing for constructive solution approaches. Building on this vision, a rich body of distributed methods for solving monotone games has been developed, in which agents seek a GNE through local computations and/or limited communication with their neighbors, while retaining the convergence guarantees afforded by monotonicity~\cite{belgioioso2020, TatKam25, TatNed25}.

When dealing with a family of GNE problems (GNEPs), parameterized by a vector of parameters, and when the agents' costs, or their best responses, are only available through black-box queries for given parameter and other agents' decision values, recent work has investigated approaches to learning {\it surrogate models}, in the form of a solution mapping from parameters to equilibrium strategies, or approximate primitives, such as surrogates of the agents' costs or best-response functions.
Such surrogate-based approaches to GNE computation can be divided into two main groups: ($i$) \emph{learning an approximation of the solution mapping} directly, as in~\cite{BT26}, where the Nikaido--Isoda gap function is used as a loss to train a neural network model of a continuous GNE selection, and in~\cite{goktas2023generative}, where Generative Adversarial Equilibrium Solvers (GAES) are used to learn to output GNE of pseudo-games; and ($ii$) \emph{learning a surrogate of the game} (for a given parameter vector) and solving it, as in~\cite{krupa2026learning}, where a GNE is actively learned from pairwise preference data alone, collected
for a specific instance of the game, without the ability to measure either best responses or agents' costs, or in~\cite{han2024noregret}, which instead relies on Gaussian processes as surrogates for unknown utilities, casting equilibrium-seeking as no-regret optimization of an unknown objective. However, even if the underlying problem is monotone, neither approach in the works mentioned above to learn a surrogate of the game guarantees that the learned game is monotone, and the learning phase, being part of the solution procedure, must be repeated for every
different instance of the parameter vector. For learning convex constraint sets from feasible/infeasible samples, we refer the reader to~\cite{SBB25}.

\subsection{Contribution}
In this paper, we present different approaches to learning a parametric NE problem from data that is guaranteed to be monotone (or strongly monotone) for all parameter values. We 
consider both the direct learning from samples of the agents' {\it costs} and inverse learning from samples of their {\it best responses}. These samples can be either measured by querying the agents or generated by evaluating the costs or best-responses of a given parametric NE problem at different parameter values to {\it monotonify} it, by learning a surrogate game that is monotone
for all parameter values. The approach can be considered as the game-theoretic counterpart of linear system identification of possibly nonlinear dynamics for control design~\cite{Lju99}, or constructing convex approximations of nonconvex functions, which is a common approach in optimization and control~\cite{MSA16,DBS05}.

We first propose a penalty-based approach that promotes monotonicity of a generic (input-convex) parameterization of the agents' costs by penalizing violations of the monotonicity condition on the pseudogradient or on its Jacobian on training data. To guarantee monotonicity independently of the training dataset used, a second contribution of this paper
is to analyze a class of convex parametric monotone games whose pseudogradient admits the splitting $F(x,p)=F_s(x,p)+F_a(x,p)$, where the Jacobians $\nabla F_s$, $\nabla F_a$ are symmetric and anti-symmetric, 
respectively, showing that $F_s$ must be the gradient of a convex potential term and $F_a$ affine with skew-symmetric Jacobian. Parameterizing the costs accordingly, in particular by neural network functions of the parameter $p$, we guarantee {\it by construction}, regardless of the training data, that the learned game is monotone for all values of $p$, so it can be solved with standard algorithms for monotone games and generalized NE problems. Although this class does not cover all monotone game, as the skew-symmetric part of the pseudogradient Jacobian is necessarily constant, we show that it covers all monotone convex {\it quadratic} games. For the special case of inverse learning of quadratic monotone games from best-response samples, we show that the learning problem can be solved via convex semidefinite programming.

Unlike our recent work~\cite{BT26}, where we approximate the {\it solution map} of a given parametric GNEP, here we {\it learn the game itself}.
This paper, compared to~\cite{BT26}, has a twofold advantage. First, any convex constraint can be added {\it after} training, while the solution map learnt as in~\cite{BT26} would have to be recomputed if the constraints change. Second, in~\cite{BT26} the learned solution map may violate some constraints for certain values of $p$, due to limitations of the adopted neural network architecture or training algorithm, while here, due to the imposed monotonicity, feasible GNE can be computed efficiently for each given $p$ by leveraging existing algorithms for solving monotone GNE problems. 

The paper is structured as follows. Section~\ref{sec:problem-statement} formulates the direct and inverse learning problems. Section~\ref{sec:monotonicity-penalties} introduces the penalty-based approach to promote monotonicity, and Section~\ref{sec:monotone_parametrization} the parameterization that enforces it by construction, together with the corresponding training problem and its extension to nonsmooth costs and shared constraints. Section~\ref{sec:inverse-quadratic} specializes inverse learning to quadratic monotone games. Section~\ref{sec:numerical-examples} illustrates the proposed approaches on numerical examples of quadratic and nonlinear convex games, as well as the inverse learning of a game-theoretic multivariable controller. 

\subsection{Notation}
We denote by $\rr^n$ the $n$-dimensional Euclidean space.
Given $m$ vectors $v_1,\ldots,v_m$ from $\rr^n$, we denote by $\operatorname{col}(v_1,\ldots,v_m)$ their vertical concatenation. Given a vector $v$, $\|v\|$ denotes its Euclidean norm
and given two vectors $u,v\in\rr^n$, $u\odot v$ denotes their entrywise product.
Given a matrix $A$, $\|A(p)\|=\sqrt{\lambda_{\rm max}(A^\top A)}$ denotes its spectral norm and $\|A\|_F=\sqrt{\sum_{i,j}A_{ij}^2}$ its Frobenius norm.

\section{Problem statement}
\label{sec:problem-statement}
We consider a parametric game setting with $N$ agents, each choosing a decision vector $x_i\in\rr^{n_i}$ to minimize a cost function $\tilde J_i:\rr^n\times\rr^m\to\rr$, $i=1,\ldots,N$, where $\tilde J_i(x,p)$ is the cost of agent $i$ when the agents' decision vector is $x=\operatorname{col}(x_1,\ldots,x_N)\in\rr^n$, $n=\sum_{i=1}^N n_i$, for a given vector of parameters $p\in\rr^m$. In general, $\tilde J_i$ depends on the vector $x_{-i}$ collecting all the components of $x$ but those of $x_i$, leading to the following parametric Nash equilibrium (NE) problem
\begin{equation}
    x_i^\star(p) \in \arg\min_{x_i\in\rr^{n_i}} \left. \tilde J_i(x,p)\right|_{x_{-i}=x_{-i}^\star(p)},\quad i=1,\ldots,N.
\label{eq:NEP}
\end{equation}
In general, the costs $\tilde J_i$ are unknown, e.g., because they encode private preferences or objectives of the agents that cannot be directly measured or evaluated in closed form, and only data in the form of cost or best-response samples are available. To be able to find a solution to the NE problem defined in~\eqref{eq:NEP} in this case, our goal is to learn a {\it surrogate} NE problem from such data, i.e., to learn a parametric model $\{J_i(x,p;\theta)\}_{i=1}^N$ of the agents' costs, where $\theta$ is the vector of parameters of the model, $\theta\in\rr^{n_\theta}$,
and $J_i:\rr^n\times\rr^m\times\rr^{n_\theta}\to\rr$ is convex 
w.r.t. $x_i$ 
for all $x_{-i}$, $p$, and $\theta$. In particular, we are interested in learning the agents' costs, i.e. defining $\theta^*$, so that the resulting convex NE problem, with the costs $\{J_i(x,p) = J_i(x,p;\theta^*)\}_{i=1}^N$, is monotone (or strongly monotone) for all $p$, according to the following definition:
\begin{definition}
Let $F:\rr^n\times\rr^m\to\rr^n$ be the pseudogradient of the game with the costs $\{J_i(x,p)\}_i$,
\[
F(x,p)=\operatorname{col}(\nabla_{x_1}J_1(x,p),\ldots,\nabla_{x_N}J_N(x,p))\in\rr^n.
\]
The game is {\it monotone} with monotonicity constant $\mu\geq0$ (or {\it strongly monotone} if $\mu>0$) if $F$ is a $\mu$-monotone operator, i.e., if
\begin{equation}
 (x-y)^\top(F(x,p)-F(y,p))\geq\mu\|x-y\|^2
\qquad\forall x,y\in\rr^n, \, \forall p\in\rr^m.
\label{eq:monotonicity}
\end{equation}
\label{def:monmotone_game}  
\end{definition}
\noindent In the case that all costs are also twice continuously differentiable w.r.t. $x$ (which implies $F\in C^1$ w.r.t. $x$), monotonicity can be equivalently defined in terms of the Jacobian $\nabla F$ of the pseudogradient (calculated w.r.t. $x$)
as follows:
\begin{equation}
\frac{\nabla F(x,p)+\nabla F(x,p)^\top}{2}\succeq\mu I
\qquad\forall x\in\rr^n, \, \forall p\in\rr^m.
\label{eq:monotone-jacobian}
\end{equation}
i.e., the symmetric part of $\nabla F(x,p)$ is uniformly positive semidefinite (positive definite if $\mu>0$).

\noindent Monotonicity is a key property of NE problems. In particular, it implies that each $J_i$ is convex and $C^1$ w.r.t. $x_i$, and, thus, $x^\star(p)$ is a NE of the game if and only if $F(x^\star(p),p)=0$~\cite[Prop. 1.4.2]{FP03}.
If, in addition, $\mu>0$, then the NE $x^\star(p)$ exists and is unique~\cite[Th. 2.3.3(b)]{FP03}. In the case of a generalized NE problem with shared constraints, strong monotonicity of the pseudogradient is a sufficient condition for the existence and uniqueness of a variational GNE~\cite[Th. 12.1.3]{facchinei2010generalized}.
Moreover, monotonicity is a sufficient condition for the convergence of many algorithms for computing a NE or a variational GNE~\cite[Chapter 12]{FP03}. This motivates us to target monotonicity directly when learning a parametric game or its surrogate from data.

Note that we do not assume that the underlying game $\{\tilde J_i\}_{i=1}^N$ is itself monotone. Instead, we learn the closest monotone surrogate $\{J_i\}_{i=1}^N$ to it. As a result, perfect approximation of the true game cannot be guaranteed in general. Nevertheless, 
when agents' best responses can be queried, the quality of the surrogate can be assessed a posteriori by estimating the deviation between the equilibrium values predicted by the learned surrogate game and each agent's best response to those values.

We will consider two different learning scenarios, described in the next two subsections, depending on whether we have access to samples of the agents' costs or, alternatively, to samples of their best responses.

In the following discussion of the learning problems, we will use $F(x,p;\theta)$ and $\nabla F(x,p;\theta)$, without abuse of notation, to denote the pseudogradient of the game with costs $\{J_i(x,p;\theta)\}_{i=1}^N$ and its Jacobian, respectively.

\subsection{Direct learning from agents' costs}
Assume that we have a set $\DD_J=\{(x_k,p_k,\{\tilde J_i(x_k,p_k)\}_{i=1}^N)\}_{k=1}^K$ of 
data samples of the agents' costs at different decision and parameter vector values $x_k\in\rr^n$, $p_k\in\rr^m$.
The true functions $\tilde J_i$ may be either known or their values may be just observed from data.
The goal is to learn a surrogate NE problem in which the costs $J_i$ approximate $\tilde J_i$ as much as possible,
according to the following loss function 
\begin{equation}
\LL_{DJ}(\theta)=\frac{1}{K}\sum_{k=1}^K \sum_{i=1}^N \left\| J_i(x_k,p_k;\theta) - \tilde J_i(x_k,p_k) \right\|^2
\label{eq:training_problem-J}
\end{equation}
under the condition that the learned game is monotone for all values of $p$.

\subsection{Inverse-learning from best responses}
Suppose now that, instead, we have no information about the costs $\tilde J_i$, but rather have a set 
$\DD_X=\{x_k,p_k,i_k\}_{k=1}^{K}$ of best-response observations, where $i_k\in\{1,\ldots,N\}$ is the index of the agent whose best response is observed at sample $k$ and $x_k = (x_{k,i_k}, x_{k,-i_k})$. In particular,
for each sample $k$, we observe the best response $x_{k,i_k}$ of agent $i_k$ to the other agents' actions $x_{k,-i_k}$ at parameter value $p_k$:
\[
    x_{k,i_k} \triangleq \arg\min_{x_i\in\rr^{n_i}} \left. \tilde J_{i}(x,p_k)\right|_{x_{-i_k}=x_{k,-i_k}}.
\]
This may represent a common practical situation in which we can only observe the agents' actions, under the assumption that they are acting optimally at different parameter values and other agents' decisions, but their costs are not disclosed. The goal is to learn a surrogate NE problem in which the costs $J_i$ provide similar best responses to the true costs $\tilde J_i$ at the observed samples, captured by the following (bilevel) loss
\begin{equation}
    \LL_{DX}(\theta) = \frac{1}{K}\sum_{k=1}^{K}
    \Bigl\| \operatorname*{arg\,min}_{x_{i_k}} J_{i_k}(x\Big|_{x_{-i_k}=x_{k,-i_k}}\!\!\!\!\!\!,p_k, \theta)-{x}_{k,i_k} \Bigr\|^2.
    \label{eq:bilevel}
\end{equation}
We will show in Section~\ref{sec:inverse-quadratic} that, in the special case of inverse learning of quadratic monotone games, the bilevel loss can be avoided and~\eqref{eq:bilevel} can be remapped into either a nonlinear least-squares problem or a convex semidefinite program (SDP).

\subsection{Optional NE samples}
Besides having cost or best response samples, there might be cases in which a second dataset of NE samples is also available, either $\DD_{EJ}=\{(x_k^\star,p_k,\tilde J_i(x^\star_k,p_k))\}_{k=1}^{K^\star}$ or $\DD_{EX}=\{(x_k^\star,p_k)\}_{k=1}^{K^\star}$, where $x_k^\star$ is an unconstrained NE of the game at parameter value $p_k$ and $K^\star$ is the number of NE samples, $K^\star\geq 0$.

Accordingly, when $K^\star>0$, the following losses can be added to the training problems~\eqref{eq:training_problem-J} or~\eqref{eq:bilevel} to promote the matching of the NE samples:
\begin{subequations}
\begin{equation}
\LL_{EF}(\theta) = \frac{\lambda_2}{K^\star} \sum_{k=1}^{K^\star} \|F(x_k^\star,p_k;\theta)\|^2
\label{eq:NE-samples-F}
\end{equation}%
and either
\begin{equation}
\LL_{EJ}(\theta) = \frac{\lambda_1}{K^\star} \sum_{k=1}^{K^\star} \sum_{i=1}^N \left\| J_i(x_k^\star,p_k;\theta) - \tilde J_i(x_k^\star,p_k) \right\|^2
\label{eq:NE-samples-J}
\end{equation}
or
\begin{equation}
\LL_{EX}(\theta) = \frac{\lambda_1}{K^\star}\! \sum_{k=1}^{K^\star} \sum_{i=1}^N \left\| \operatorname*{arg\,min}_{x_i} J_i(x\Big|_{x_{-i}=x^\star_{k,-i}}\!\!\!\!\!\!,p_k, \theta_i)-x_{k,i}^\star  \right\|^2
\label{eq:NE-samples-X}
\end{equation}
\label{eq:NE-samples}%
\end{subequations}
where $\lambda_1,\lambda_2\geq 0$ are penalties that balance the importance of fitting the best-response samples in $\DD_X$ and matching the NE samples in either $\DD_{EJ}$ or $\DD_{EX}$.

Although we mainly focus on learning the costs $J_i$ in a game without constraints (see~\eqref{eq:NEP}), once an unconstrainted monotone game is learned, it can be extended to handle local constraints $x_i\in\XX_i(p)$ and to a GNEP when shared constraints $x\in\XX(p)$ are added to the game.
Note that, in the presence of constraints, the penalty~\eqref{eq:NE-samples-X} is relevant only if the training points $x_k^\star$ are still NEs, not GNEs, otherwise~\eqref{eq:NE-samples-X} must be modified to include the constraint $x\in\XX(p)$ in the inner optimization problem, in the case $\XX(p)$ is already available during the learning phase. Moreover, while~\eqref{eq:NE-samples-J} is relevant no matter the nature of $x_k^\star$ is, when $x_k^\star$ are GNEs the penalty~\eqref{eq:NE-samples-F} should not be included.

So far, we have formulated the learning problems~\eqref{eq:training_problem-J} and~\eqref{eq:bilevel} without explicitly enforcing the monotonicity of the resulting game. 
In the next sections we will introduce two different approaches 
to learning the costs $J_i$ so that they define a parametric monotone game. The first approach is to parameterize each agent's cost function as a generic input-convex parametric model $J_i(x,p;\theta)$ and enforce monotonicity by adding further penalties that promote the monotonicity of the resulting game. The second approach is to enforce monotonicity {\it by construction}, leveraging a suitable parameterization of the models $J_i$, so that the resulting game is monotone for all $p$ no matter what the model parameters $\theta$ and the dataset used are.

\section{Monotonicity enforcement via penalties}
\label{sec:monotonicity-penalties}
Let $\DD_M=\{(x_j,p_j)\}_{j=1}^M$ be a further set of samples of the decision and parameter vectors generated in the range of $x$ and $p$ of interest, such as in the smallest hyperboxes containing the training samples in $\DD_J$ or $\DD_X$, respectively. Note that the samples in $\DD_M$ do not contain any measurement of the agents' costs or best responses.

We consider the following three alternative penalties for promoting monotonicity:
\begin{subequations}
\begin{equation}
\begin{aligned}
\LL_{M1}(\theta)=&\frac{\gamma}{M(M-1)}\sum_{j=1}^M \sum_{h=1, h\neq j}^M \max\left\{0,\mu\|x_h-x_j\|^2\right.\\
&\left.-(x_j-x_h)^\top\left(F(x_j,p_j;\theta)-F(x_h,p_j;\theta)\right)\right\}^2
\end{aligned}
\label{eq:monotonicity-violation-1}
\end{equation}
related to the violation of the monotonicity condition~\eqref{eq:monotonicity},
with $\gamma\gg 1$ a large penalty weight, or
\begin{equation}
\begin{aligned}
\LL_{M2}(\theta)=&\frac{\gamma}{M}\sum_{j=1}^M \max\left\{0,2\mu-\lambda_{\min}\left(\nabla_x F(x_j,p_j;\theta) +\nabla_x F(x_j,p_j;\theta)^\top\right)\right\}^2
\end{aligned}
\label{eq:monotonicity-violation-2}
\end{equation}
related instead to the violation of the monotonicity condition~\eqref{eq:monotone-jacobian}.
The latter is also satisfied whenever the symmetric part of the pseudogradient Jacobian is the Hessian of a convex function with strong-convexity parameter $\mu$ (cf.~Lemma~\ref{lem:splitting} below), which leads to the following third alternative penalty to promote monotonicity: introduce an auxiliary, twice differentiable, input-convex network $\Phi(x,p;\theta)$ and penalize
\begin{equation}
\begin{aligned}
    \LL_{M3}(\theta) =&\frac{\gamma}{M}\sum_{j=1}^M \left\| \nabla_x F(x_j,p_j;\theta)+
\nabla_x F(x_j,p_j;\theta)^\top 
- \nabla^2_x \Phi(x_j,p_j;\theta) -2\mu I\right\|_F^2.
\end{aligned}
\label{eq:monotonicity-violation-3}
\end{equation}%
\end{subequations}
Note that in~\eqref{eq:monotonicity-violation-3}, $\theta$ also includes the parameters of $\Phi$, which is only auxiliary and can be discarded after training. 

Finally, the learning problem can be formulated as follows:
\begin{equation}
\min_\theta \LL_{D}(\theta) + \LL_M(\theta) + \LL_{E}(\theta) + \LL_{EF}(\theta) + \frac{\rho}{2}\|\theta\|^2
\label{eq:learning-problem-point}
\end{equation}
where $\LL_D$ is either $\LL_{DJ}$ or $\LL_{DX}$, $\LL_M$ is one of the three penalties $\LL_{M1}$, $\LL_{M2}$, or $\LL_{M3}$, and $\LL_E$ is either $\LL_{EJ}$ or $\LL_{EX}$ if NE samples are available, and zero otherwise,
and similarly $\LL_{EF}(\theta)$ is zero if no NE samples are given. The last term is a standard $\ell_2$-regularization introduced to prevent overfitting.

\section{Monotonicity enforcement by construction}
\label{sec:monotone_parametrization}
The main drawback of the monotonicity-enforcing approach described in Section~\ref{sec:monotonicity-penalties} is that
no guarantee exists that the resulting game is monotone for all values of the parameter $p$, as the result largely
depends on the weight $\gamma$ in $\LL_M$ and on the richness of the dataset $\DD_M$. 
In order to construct agents' costs $J_i$ that enforce the monotonicity condition~\eqref{eq:monotone-jacobian} by construction, in this section we propose a different approach. 

The following lemma (Lemma~\ref{lem:splitting}) shows that a sufficient condition for monotonicity is that the pseudogradient $F$ can be split into a symmetric and a skew-symmetric part.
For simplicity of notation, in the following we omit the dependence of $F$ on the parameter $p$ and on the model parameters $\theta$.

\begin{lemma}
\label{lem:splitting}
Let $F:\rr^n\to\rr^n$ be the pseudogradient of a game with
costs $J_i:\rr^n\to\rr$, $J_i\in C^2$ and convex in $x_i$, $\forall i=1,\ldots,N$,
and assume that
\begin{equation}
F(x)=F_s(x)+F_a(x)
\label{eq:F(x)-split}
\end{equation}
where $F_s,F_a:\rr^n\to\rr^n$, $F_s\in C^1$, $F_a\in C^2$ are such that
$\nabla F_s(x)$ is symmetric and positive semidefinite for all $x$, and
$\nabla F_a(x)$ is skew-symmetric for all $x$. Then:
\begin{enumerate}
\item[($i$)] $F_s(x)=\nabla\Psi(x)$ for some convex function
$\Psi:\rr^n\to\rr$, $\Psi\in C^2$;
\item[($ii$)] $F_a(x)=Sx+r$ is affine, with $S^\top=-S$,
$r\in\rr^n$;
\item[($iii$)] the game is monotone and each cost function necessarily has
the form
\begin{equation}
J_i(x)=\Psi(x)+\sum_{j\neq i}x_i^\top S_{ij}x_j+r_i^\top x_i
+\phi_i(x_{-i})
\label{eq:J_i-form}
\end{equation}
where $\phi_i:\rr^{n-n_i}\to\rr$ and
$S=[S_{ij}]$, $r=\operatorname{col}(r_1,\ldots,r_N)$ are partitioned
according to the dimensions $n_1,\ldots,n_N$ of the agents' decision vectors.
\end{enumerate}
\end{lemma}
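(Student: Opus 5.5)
For ($i$), I will use the Poincaré lemma on the simply connected domain $\rr^n$. Since $F_s\in C^1$ and $\nabla F_s(x)$ is symmetric, the 1-form $\sum_k F_{s,k}\,dx_k$ is closed, hence exact. Concretely, I will define
\[
\Psi(x)=\int_0^1 F_s(tx)^\top x\,dt
\]
and verify $\nabla\Psi=F_s$ by differentiating under the integral and using the symmetry of $\nabla F_s$. Then $\Psi\in C^2$ because $\nabla\Psi=F_s\in C^1$. Convexity follows since $\nabla^2\Psi=\nabla F_s\succeq0$ everywhere.

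For ($ii$), the key step, I will work componentwise. Write $A(x)=\nabla F_a(x)$, with entries $a_{jk}=\partial_k F_{a,j}$, so that $a_{jk}=-a_{kj}$ and, in particular, $a_{jj}=0$. Since $F_a\in C^2$, mixed partials commute, so $\partial_l a_{jk}=\partial_k a_{jl}$. This is the classical Killing-type argument:
\[
\partial_l a_{jk}=\partial_k a_{jl}=-\partial_k a_{lj}=-\partial_j a_{lk}=\partial_j a_{kl}=\partial_l a_{kj}=-\partial_l a_{jk}.
\]
Hence $\partial_l a_{jk}=0$ for all indices, so $A$ is constant, $A\equiv S$ with $S^\top=-S$. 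Integrating gives $F_a(x)=Sx+r$. This index-chasing step is where I expect the main care is needed, since the alternation of the two symmetries must close into a sign flip; the hypothesis $F_a\in C^2$ is exactly what licenses commuting mixed partials.

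For ($iii$), monotonicity is immediate. We have $\nabla F=\nabla^2\Psi+S$, and its symmetric part is $\nabla^2\Psi\succeq0$, so \eqref{eq:monotone-jacobian} holds with $\mu=0$. Equivalently, $(x-y)^\top(F(x)-F(y))\ge0$ by convexity of $\Psi$ together with $(x-y)^\top S(x-y)=0$.

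For the form of $J_i$, I will read off the block $i$ of $F$:
\[
\nabla_{x_i}J_i(x)=\nabla_{x_i}\Psi(x)+\sum_j S_{ij}x_j+r_i .
\]
Skew-symmetry gives $S_{ii}^\top=-S_{ii}$, so $x_i^\top S_{ii}x_i=0$, and $S_{ii}x_i$ must be handled. Define
\[
G_i(x)=J_i(x)-\Psi(x)-\sum_{j\neq i}x_i^\top S_{ij}x_j-r_i^\top x_i .
\]
Then $\nabla_{x_i}G_i=S_{ii}x_i$. But $\nabla_{x_i}G_i$ is a gradient in $x_i$, so its Jacobian in $x_i$, namely $S_{ii}$, must be symmetric; being also skew, $S_{ii}=0$. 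Therefore $\nabla_{x_i}G_i\equiv0$, and on the connected set $\rr^{n_i}$ this means $G_i$ depends only on $x_{-i}$. Setting $\phi_i(x_{-i}):=G_i(x)$ yields \eqref{eq:J_i-form}.
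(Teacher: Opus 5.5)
Your proposal is correct and follows the paper's route: the Poincar\'e lemma for ($i$), the Killing-type cycling of mixed partials and skew-symmetry for ($ii$), and integration in $x_i$ for ($iii$). Your handling of the diagonal block is cleaner than the paper's. You show $S_{ii}=0$ because it is both skew-symmetric and the $x_i$-Hessian of the $C^2$ function $G_i$. The paper instead integrates $S_{ii}x_i$ to $\tfrac12 x_i^\top S_{ii}x_i$, which is only valid once $S_{ii}=0$ is known.
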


\begin{proof}
($i$) Since $\nabla F_s(x)$ is symmetric for all $x$ and $\rr^n$ is
simply connected, by the Poincar\'e lemma $F_s$ is a gradient field:
there exists $\Psi\in C^2(\rr^n)$ with $F_s=\nabla\Psi$. Since
$\nabla^2\Psi(x)=\nabla F_s(x)\succeq0$ for all $x$, $\Psi$ is convex.

($ii$) The condition $\nabla F_a(x)+\nabla F_a(x)^\top=0$ for all $x$ is equivalent to
\begin{equation}
\frac{\partial (F_a)_i}{\partial x_j}(x)
+\frac{\partial (F_a)_j}{\partial x_i}(x)=0
\qquad i,j=1,\ldots,n.
\label{eq:Killing}
\end{equation}
Differentiating with respect to $x_k$,
exploiting the symmetry of Hessian matrices,
and cyclically permuting the indices
$(i,j,k)$ yields
\[
\begin{aligned}
\partial_k\partial_j (F_a)_i 
&=-\partial_k\partial_i (F_a)_j 
&=-\partial_i\partial_k (F_a)_j\\
&=\partial_i\partial_j (F_a)_k 
&=\partial_j\partial_i (F_a)_k \\
&=-\partial_j\partial_k (F_a)_i & = -\partial_k\partial_j (F_a)_i
\end{aligned}
\]
which in turn implies that $\partial_k\partial_j (F_a)_i=0$ for all $j,k,i=1,\ldots,n$, i.e., all second derivatives of $(F_a)_i$ are zero, for all $i=1,\ldots,n$.
Therefore, $F_a(x)=Sx+r$ with $S=\nabla F_a$ constant and $S^\top=-S$ due to the assumed skew-symmetry of $F_a$.

($iii$) By ($i$)--($ii$), the pseudogradient has the form
\[
F(x)=\nabla\Psi(x)+Sx+r
\qquad
\nabla F(x)=\nabla^2\Psi(x)+S
\]
whose symmetric part is $\tfrac{1}{2}(\nabla F(x)+\nabla F(x)^\top)=\nabla^2\Psi(x)\succeq0$; hence $F$ is monotone.
Moreover, since $F_i(x)=\nabla_{x_i}J_i(x)$, integrating with respect to
$x_i$ for fixed $x_{-i}$ gives
\[
J_i(x)
=\Psi(x)+x_i^\top\sum_{j\neq i}S_{ij}x_j
+\tfrac12x_i^\top S_{ii}x_i+r_i^\top x_i+\phi_i(x_{-i})
\]
for some function $\phi_i$ such that $\phi_i(x_{-i})$ is independent of $x_i$, and $x_i^\top S_{ii}x_i=0$ because $S_{ii}$ is skew-symmetric, leading to the form~\eqref{eq:J_i-form}.
\end{proof}
The convex potential term $\Psi(x)$ can be a rather arbitrary function in $C^2$. Special cases of $\Psi(x)$ include general input-convex neural networks,
and the smooth approximation of the indicator function of the set defined by known constraints on agents' actions; for example, for a box constraint $x\in[x_{\min},x_{\max}]$, the smooth barrier function
\[
\Psi(x)
=\beta\varepsilon
\left(
\log\!\left(1+e^{(x_{\min}-x)/\varepsilon}\right)
+
\log\!\left(1+e^{(x-x_{\max})/\varepsilon}\right)
\right)
\]
where $\beta\gg 1$ and $\varepsilon\ll 1$. 

\begin{remark}
\label{rmk:nonsmooth-Psi}
A simple extension to allow $J_i\not\in C^2$ is to consider an arbitrary convex potential 
term $\Phi:\rr^n\to\rr$ and set $J_i(x)=\Phi(x)+H_i(x)$, with $H_i\in C^2$ and the game defined by the costs $H_i$ satisfying the splitting assumption~\eqref{eq:F(x)-split}, possibly with a zero symmetric part of the associated pseudogradient if no further potential terms are required. By Lemma~\ref{lem:splitting}, the resulting game is monotone and has the form~\eqref{eq:J_i-form}, with $\Psi(x)=\Phi(x)+\Psi_H(x)$, where $\Psi_H\in C^2$ is the convex potential due to the smooth part $H_i$. An example of extending the construction to nonsmooth convex cost functions is to select 
\[
    \Phi(x) = \max_{j=1,\ldots,M} \Phi_j(x)
\]
where each $\Phi_j(x)$ is a smooth convex function. By introducing a slack variable $y_i$ for each agent, the associated NE problem can be rewritten as the following jointly convex merely monotone generalized Nash equilibrium (GNE) problem with shared constraints:
\[
    \begin{aligned}
    \min_{x_i,y_i}\ & y_i+ \Psi(x)+\sum_{j\neq i}x_i^\top S_{ij}x_j+r_i^\top x_i
+\phi_i(x_{-i})
    \\
    \text{s.t. } &\Phi_j(x)\leq y_i,\quad j=1,\ldots,M,\quad i=1,\ldots,N.
    \end{aligned}
\]
At the equilibrium, each agent $i$ has $y_i^\star = \max_{j=1,\ldots,M} \Phi_j(x^\star)$ due to minimizing $y_i$.

Special cases of this formulation include convex piecewise quadratic terms $\Phi(x)$, which can be expressed as the maximum of a finite number of quadratic functions $\Phi_j(x)$, and convex piecewise linear terms $\Phi(x)$, which can be expressed as the maximum of a finite number of affine functions $\Phi_j(x)$. 
\end{remark}

\begin{remark}
\label{rmk:splitting_not_necessary}
The splitting assumption~\eqref{eq:F(x)-split} is sufficient for monotonicity but not
necessary. Consider the two-player game with scalar decisions and costs
\[
J_1(x)=\tfrac12x_1^2+x_1\sin x_2
\qquad
J_2(x)=\tfrac12x_2^2-x_2\sin x_1
\]
each smooth and strongly convex in the agent's own variable. Its
pseudogradient and Jacobian are
\[
F(x)=\begin{bmatrix}x_1+\sin x_2\\ x_2-\sin x_1\end{bmatrix}
\qquad
\nabla F(x)=\begin{bmatrix}1 & \cos x_2\\ -\cos x_1 & 1\end{bmatrix}.
\]
The eigenvalues of the symmetric part of $\nabla F(x)$ are given by
\[
    \det\begin{bmatrix}
         \lambda - 1 & \frac12(\cos x_2-\cos x_1) \\ \frac12(\cos x_1-\cos x_2) & \lambda - 1 
    \end{bmatrix} = 0
\]
or
$
    \lambda^2 - 2\lambda + 1 - \frac14(\cos x_2-\cos x_1)^2 = 0,
$
which gives $\lambda = 1 \pm \frac12|\cos x_2-\cos x_1|$. As both
eigenvalues are nonnegative, the game is monotone.
However, the skew-symmetric part
\[
\frac12\bigl(\nabla F(x)-\nabla F(x)^\top\bigr)
=\frac{\cos x_1+\cos x_2}{2}
\begin{bmatrix}0&1\\-1&0\end{bmatrix}
\]
depends on $x$; hence, by Lemma~\ref{lem:splitting}, no decomposition $F=F_s+F_a$ with
$\nabla F_s$ symmetric and $\nabla F_a$ skew-symmetric everywhere exists
for this game, otherwise the skew-symmetric part would be constant.
\QED
\end{remark}
While Remark~\ref{rmk:splitting_not_necessary} shows that the splitting assumption~\eqref{eq:F(x)-split} is not necessary in general for monotonicity, the following lemma
shows that it is indeed necessary when restricting to the class of monotone {\it quadratic} games. 

\begin{lemma}
\label{lem:quadratic_complete}
If the costs $J_i$ are convex quadratic functions of $x$ and the game is monotone with
monotonicity constant $\mu\geq0$, then $F(x)=F_s(x)+F_a(x)$ with
\[
F_s(x)=(C^\top C+\mu I)x
\qquad
F_a(x)=(D-D^\top)x+q
\]
for some upper-triangular matrix $C\in\rr^{n\times n}$ and block strictly upper-triangular matrix
$D\in\rr^{n\times n}$, where blocks refer to the partition of $x$ into the agents' decision
vectors $x_1,\ldots,x_N$, i.e., $D=[D_{ij}]$ with
$D_{ij}\in\rr^{n_i\times n_j}$ and $D_{ij}=0$ for $i\geq j$. Moreover, by letting the resulting pseudogradient matrix be
\begin{subequations}
\begin{equation}
A=C^\top C+D-D^\top+\mu I
\label{eq:pseudogradient-matrix}
\end{equation}
the individual cost functions can be written, without loss of generality, as
\begin{equation}
    J_i(x) = \frac12 x_i^\top A_{ii} x_i + \sum_{j\neq i} x_i^\top A_{ij} x_j + q_i^\top x_i + \frac12 x_{-i}^\top A_{-i,-i}x_{-i}.
\label{eq:Ji-quadratic}
\end{equation}
\label{eq:quadratic_model}%
\end{subequations}
\end{lemma}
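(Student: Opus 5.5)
Since each $J_i$ is quadratic in $x$, the pseudogradient is affine, $F(x)=Ax+q$, where the $i$-th block row of $A$ is $[A_{i1}\ \cdots\ A_{iN}]$ with $A_{ij}=\nabla_{x_j}\nabla_{x_i}J_i$ and $q_i=\nabla_{x_i}J_i(0)$. The constant matrix $A$ is the Jacobian $\nabla F$. I therefore plan to decompose $A$ into its symmetric and skew-symmetric parts,
\[
A_s=\tfrac12(A+A^\top),\qquad A_a=\tfrac12(A-A^\top).
\]
I then set $F_s(x)=A_sx$ and $F_a(x)=A_ax+q$. Monotonicity with constant $\mu$ is equivalent to \eqref{eq:monotone-jacobian}, which here reads $A_s\succeq\mu I$. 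Hence $A_s-\mu I$ is symmetric positive semidefinite. It therefore admits a factorization $A_s-\mu I=C^\top C$ with $C$ upper triangular. For this I will use a Cholesky-type factorization valid also in the semidefinite case: for instance, take the QR decomposition $QR$ of the symmetric square root $(A_s-\mu I)^{1/2}$, so that $C^\top C=R^\top Q^\top QR=R^\top R$ with $C=R$. This yields $F_s(x)=(C^\top C+\mu I)x$.

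For the skew part, I write $A_a$ in blocks. Its diagonal blocks $(A_a)_{ii}$ are skew-symmetric $n_i\times n_i$ matrices, and these are not representable as $D_{ii}-D_{ii}^\top$ with $D_{ii}=0$. This is the main subtlety, and I will handle it using the freedom in the costs. I will argue that the diagonal blocks of the skew part vanish, since $A_{ii}=\nabla^2_{x_ix_i}J_i$ is a Hessian and hence symmetric. Then $(A_a)_{ii}=\tfrac12(A_{ii}-A_{ii}^\top)=0$. For the off-diagonal blocks, skew-symmetry gives $(A_a)_{ji}=-(A_a)_{ij}^\top$. I therefore define $D_{ij}=(A_a)_{ij}$ for $i<j$ and $D_{ij}=0$ otherwise, which gives $D-D^\top=A_a$. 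Then $A=C^\top C+\mu I+D-D^\top$, which is \eqref{eq:pseudogradient-matrix}.

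For the cost representation \eqref{eq:Ji-quadratic}, I use the fact that $\nabla_{x_i}J_i(x)=F_i(x)=\sum_jA_{ij}x_j+q_i$ with $A_{ii}$ symmetric. Integrating in $x_i$ (as in part ($iii$) of Lemma~\ref{lem:splitting}) gives
\[
J_i(x)=\tfrac12x_i^\top A_{ii}x_i+\sum_{j\ne i}x_i^\top A_{ij}x_j+q_i^\top x_i+\phi_i(x_{-i}),
\]
with $\phi_i$ quadratic and independent of $x_i$. The term $\phi_i$ does not affect $F$, best responses, or equilibria. It can therefore be replaced, without loss of generality, by any function of $x_{-i}$, and in particular by $\tfrac12x_{-i}^\top A_{-i,-i}x_{-i}$.

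The only step needing care is the semidefinite Cholesky factorization with an upper-triangular factor, which the QR argument above settles. The rest is bookkeeping of blocks.
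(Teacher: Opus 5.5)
Your proposal is correct and follows essentially the same route as the paper. Both arguments split $A$ into symmetric and skew-symmetric parts, factor $A_s-\mu I=C^\top C$ via a QR decomposition, use symmetry of the Hessian blocks $A_{ii}$ to make the diagonal blocks of the skew part vanish (taking $D$ as its block strictly upper-triangular part), then integrate in $x_i$ and normalize the $x_{-i}$-only term. Your choice to QR-decompose the symmetric square root is a small nicety that guarantees $C$ is square $n\times n$, whereas the paper uses an unspecified $B$ with $B^\top B=A_s-\mu I$.
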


\begin{proof}
Since the game is quadratic, its pseudogradient $F(x)=Ax+q$. By splitting $A=\frac{A+A^\top}{2}+\frac{A-A^\top}{2}$ we get $F(x)=F_s(x)+F_a(x)$, with $F_s(x)=\frac{A+A^\top}{2}x$ and $F_a(x)=\frac{A-A^\top}{2}x+q$. 
By assumption, the matrix $M=\frac{A+A^\top}{2}-\mu I$ is symmetric and positive semidefinite,
so $M=B^\top B$ for some matrix $B$. By taking the QR factorization $B=QC$ (with $Q^\top Q=I$
and $C$ upper-triangular) we get
$M=C^\top C$ and hence $F_s(x)=(C^\top C+\mu I)x$. Moreover,
each diagonal block $A_{ii}=\nabla^2_{x_ix_i}J_i$ is a Hessian and hence
symmetric, so the diagonal blocks $\frac{A_{ii}-A_{ii}^\top}{2}$ of the
skew-symmetric matrix $\frac{A-A^\top}{2}$ vanish. Next, let $D$ be its block
strictly upper-triangular part, i.e., $D_{ij}=\frac{A_{ij}-A_{ji}^\top}{2}$
for $i<j$ and $D_{ij}=0$ for $i\geq j$. Then
$\frac{A-A^\top}{2}=D-D^\top$ and hence $F_a(x)=(D-D^\top)x+q$. Finally, since the $i$-th block of $F$ is
$\nabla_{x_i}J_i(x)=A_{ii}x_i+\sum_{j\neq i}A_{ij}x_j+q_i$, integrating with
respect to $x_i$ determines $J_i$ up to an additive term depending only on
$x_{-i}$. As such a term does not affect the optimization of player $i$, it
can be normalized to $\frac12 x_{-i}^\top A_{-i,-i}x_{-i}$,
giving~\eqref{eq:Ji-quadratic}.
\end{proof}

\subsection{Training problem}
Building on the parameterization of Lemma~\ref{lem:quadratic_complete}, we now let $C$, $D$, and $q$ depend on the parameter vector $p$ and train them from data so as to fit a parametric family of monotone quadratic games.
Similarly to~\cite{SBB25}, we parameterize each nonzero entry of $C$, $D$, and $q$ as a function of the parameter vector $p$, such as a neural network, and we also parameterize the potential function $\Psi(x,p;\theta)$ as an input-convex neural network, as well as neural-network functions $\phi_i(x_{-i},p;\theta)$. 
In this case, $\theta$ collects all the parameters of these neural networks. Other classes of parametric functions can be used as well.
The resulting learning problem, which reads as in~\eqref{eq:learning-problem-point}
with $\LL_M(\theta)=0$, provides a model $J(x,p;\theta)$ that trades off between fitting the data and ensuring monotonicity by construction for all values of $p$.

\begin{remark}
The monotonicity parameter $\mu$ is a degree of freedom of the learning approach: it trades off the flexibility of the model (small $\mu$, even $\mu=0$) against the numerical and convergence properties that the online solution method should enjoy for any instance of $p$ (large $\mu$), especially when shared constraints are added to the game after the agents' costs have been learned. Note that we choose here a uniform $\mu$ for all values of $p$, as parameter-dependent monotonicity can already be enforced by properly selecting $\Psi(x,p;\theta)$, for example by setting $\Psi(x,p;\theta)=\frac{\tilde\mu(p;\theta)}{2}\|x\|^2+\tilde\Psi(x,p;\theta)$ with $\tilde\Psi$ convex in $x$ and $\tilde\mu(p;\theta)\geq 0$ for all $p$.
\end{remark}

\section{Inverse-learning of monotone quadratic games}
\label{sec:inverse-quadratic}

We now specialize the inverse-learning problem of Section~\ref{sec:problem-statement} to convex quadratic games, showing that the bilevel loss~\eqref{eq:bilevel} can be avoided altogether and the training problem recast as a tractable nonlinear least-squares problem or, in the monotone case, as a convex semidefinite program.

\subsection{Convex quadratic games}
Consider again the learning problem in~\eqref{eq:bilevel} with convex quadratic costs $J_i$ as in~\eqref{eq:Ji-quadratic}. The low-level problem is solved by $ \bar x_{k,i}$ such that
\begin{equation}
    \nabla_{x_i} J_i = A_{ii} \bar x_{k,i} + A_{i,-i} x_{k,-i} + q_i = 0
\label{eq:zero-gradient}
\end{equation}
which gives the affine best-response expression $\bar x_{k,i} = -A_{ii}^{-1}\bigl( A_{i,-i}\, x_{k,-i} + q_i \bigr)$.
We provide below three different approaches to address the learning problem~\eqref{eq:bilevel} based on such an affine relation between $\bar x_{k,i}$ and $x_{k,-i}$.

\subsubsection{Nonlinear least-squares formulation}
Given the pseudogradient-matrix parameterization in~\eqref{eq:pseudogradient-matrix}, the learning problem can be formulated as the following nonlinear least-squares problem:
\begin{equation}
    \begin{aligned}
    \min_\theta &\frac{\rho}{2}\|\theta\|^2 +
        \frac{1}{K}\sum_{k=1}^{K}
        \bigl\| A_{ii}^{-1}\bigl( A_{i,-i} x_{k,-i} + q_i \bigr) + {x}_{k,i} \bigr\|_2^2
    \end{aligned}
\label{eq:nls-monotone}
\end{equation}
where $i=i_k$ and we have omitted the explicit dependence of $A_{ii}$, $A_{i,-i}$, and $q_i$ on $p_k$ and $\theta$ for brevity, and $A$ is parameterized as in~\eqref{eq:pseudogradient-matrix} to ensure the monotonicity of the learned game.

\subsubsection{Semidefinite programming formulation}
By weighting each residual in~\eqref{eq:nls-monotone} by $A_{ii}$, which
corresponds to considering the violation of the zero-gradient condition~\eqref{eq:zero-gradient}
instead of the best-response errors as in~\eqref{eq:nls-monotone}, and directly parameterizing the pseudogradient matrix $A$ as a free parameter independent of $p$ and $q=q_0+q_1 p$ as affine, $\theta=(A,q_0, q_1)$, problem~\eqref{eq:nls-monotone} can be reformulated as the following convex SDP problem in $\theta$:
\begin{equation}
    \begin{aligned}
    \min_\theta &\frac{\rho}{2}\|\theta\|^2 \!+\!
        \frac{1}{K}\sum_{k=1}^{K}
        \bigl\| A_{i,-i} x_{k,-i} + q_{0i}+q_{1i} p_k + A_{ii}{x}_{k,i} \bigr\|_2^2\\
    \textrm{s.t. } &\tfrac{1}{2}\bigl( A + A^\top \bigr) \succeq \mu I\\
    & A_{ii} = A_{ii}^\top,\  i = 1, \ldots, N\\
    & \tr(A)=n
    \end{aligned}
\label{eq:sdp-monotone}
\end{equation}
which can be solved to global optimality. Note that the constraint $\tr(A)=n$ is added to avoid the trivial solution $\theta=(0,0,0)$ when $\mu=0$, and only corresponds to a scaling of the resulting
learned cost functions.

\subsubsection{Least-squares/semidefinite programming formulation}
Problem~\eqref{eq:sdp-monotone} can be further manipulated into the cascade of an unconstrained least-squares problem and an SDP problem. To this end, let us introduce the following new variables:
\begin{equation}
    \begin{aligned}
    P_i \triangleq &-A_{ii}^{-1} A_{i,-i} \in \rr^{n_i \times n_{-i}}\\
    f_{ji} \triangleq &-A_{ii}^{-1} q_{0i} \in \rr^{n_i}, \ j=0,1.
    \end{aligned}
\end{equation}
Substituting the affine best response into \eqref{eq:bilevel}, the bilevel problem collapses to the \emph{linear} least-squares problem
\begin{equation}
    \min_{\{P_i,\, f_i\}_{i=1}^{N}}
    \frac{1}{K}\sum_{i=1}^{N}\sum_{k:\ i_k=i}
    \bigl\| P_i x_{k,-i} + f_{i0}+f_{i1}p_k - {x}_{k,i} \bigr\|_2^2
    \label{eq:ls}
\end{equation}
which decouples across agents: each $(P_i, f_i)$ is obtained from an ordinary least-squares fit of agent $i$'s measured best responses on the opponents' decisions. After solving~\eqref{eq:ls}, the original pseudogradient matrix $A$ and vectors $q_0,q_1$ are related to the optimal least-squares estimates $(P_i, f_{i0}, f_{i1})$ by the linear relations
\begin{equation}
    A_{ii}\, P_i = -\,A_{i,-i},
    \qquad
    A_{ii}\, f_{ji} = -\,q_{ji}, \quad j=0,1.
    \label{eq:A-reconstruction}
\end{equation}
Since no convexity or monotonicity constraints are imposed in~\eqref{eq:ls}, to recover a monotone game we can solve the following SDP problem:
\begin{equation}
    \begin{aligned}
    \min_{A, q} &
    \sum_{i=1}^{N}
    \Bigl\| A_{ii} \bigl[ P_i\ f_{i0} \ f_{i1} \bigr] + \bigl[ A_{i,-i}\ q_{i0}\ q_{i1} \bigr]  \Bigr\|_F^2\\
    \text{s.t. } & \textrm{constraints in~\eqref{eq:sdp-monotone}}
    \label{eq:two-stage}
    \end{aligned}
\end{equation}
whose cost function is the sum of the squared residuals of the linear equations~\eqref{eq:A-reconstruction}. 
Problem \eqref{eq:two-stage} is again an LMI-constrained least-squares problem, but unlike
\eqref{eq:sdp-monotone}, its cost function is independent of the number of samples $K$.
The advantage of such a two-stage approach is that the first stage~\eqref{eq:ls} is a standard unconstrained least-squares problem that can be solved very efficiently, and the second stage~\eqref{eq:two-stage} is an semidefinite program whose size depends only on the number of agents and their decision dimensions, but not on the number of samples $K$.

\subsubsection{Extensions}
The formulation~\eqref{eq:nls-monotone} can be extended to appropriate non-quadratic monotone games by modeling the agents' costs as in Section~\ref{sec:monotone_parametrization}, so that the pseudogradient of the game is $F(x,p;\theta)=A(p;\theta)x+q(p;\theta)+\nabla_x\Psi(x,p;\theta)$, with $A(p;\theta)$ parameterized as in~\eqref{eq:pseudogradient-matrix} and $\Psi$ convex in $x$, and minimizing the residual of the first-order optimality condition $\nabla_{x_i} J_i = 0$ in a nonlinear least-squares sense, similarly to the loss adopted in~\eqref{eq:sdp-monotone}:
\begin{equation}
    \begin{aligned}
    \min_\theta\ &\frac{\rho}{2}\|\theta\|^2 + \frac{1}{K}\sum_{k=1}^{K} \Big\|
    \nabla_{x_i} \Psi(x_k,p_k;\theta)
    + A_{ii}(p_k;\theta) x_{k,i} + A_{i,-i}(p_k;\theta) x_{k,-i} + q_i(p_k;\theta)  \Big\|_2^2\\ &  +\frac{1}{K}\sum_{k=1}^{K}\left(\tr(A(p_k;\theta))-n\right)^2
    \end{aligned}
\label{eq:nls-nonquadratic}
\end{equation}
where $i=i_k$  and the last term 
plays the role of the constraint $\tr(A)=n$ in~\eqref{eq:sdp-monotone} to avoid the trivial zero solution. The resulting problem is a nonlinear least-squares problem in $\theta$.

We can also extend the above approach to  monotone {\it generalized} NE problems, where the agents' decisions are subject to a shared polyhedral constraint set $\mathcal{X}$ 
\begin{equation}
    \mathcal{X} =\bigl\{x \in \rr^{n}:\ A_x x \leq b_x\bigr\},
    \qquad A_x \in \rr^{m \times n},\ b_x \in \rr^{m}
\end{equation}
where $A_x$ and $b_x$ are known or learnable. In this case, the best response of agent $i=i_k$ to $\bar{x}_{k,-i}$ is the solution of the quadratic program
\begin{equation}
    \begin{aligned}
    \bar x_i(\theta, p_k, x_{k,-i};\theta) \in&
    \operatorname*{arg\,min}_{x_i} J_i(x,p_k;\theta)\\
    \textrm{s.t. }& A_x x \leq b_x,\ \mbox{where}\ x_{-i} = {x}_{k,-i}
    \end{aligned}
    \label{eq:inner-qp}
\end{equation}
and the inverse problem becomes the following bilevel program
\begin{equation}
    \min_{\theta} \frac{1}{K}\sum_{k=1}^{K}\Big\|
    \operatorname*{arg\,min}_{x_i:\; A_x x \leq b_x,\; x_{-i}={x}_{k,-i}} J_i(x,p_k;\theta)
    -{x}_{k,i}\Big\|_2^2
    \label{eq:bilevel-constrained}
\end{equation}
which can be solved as a nonlinear least squares problem by employing differentiable quadratic programming (QP) solvers~\cite{TM24,AABBDK19} when the costs $J_i$ are parameterized as in~\eqref{eq:quadratic_model}.

\section{Lipschitz constant of the pseudogradient}
\label{sec:lipschitz}
In the previous sections, we have shown how to fit a parametric surrogate monotone game to data, with the goal of learning a model that can predict the agents' behavior for new values of the parameter $p$ and other agent's decisions $x_{-i}$. When the goal is to compute a Nash equilibrium of the surrogate game for a given $p$, and, in particular, a variational GNE (v-GNE) when imposing shared constraints, different online algorithms can be used. 
Solution methods include minimizing the residual of the joint KKT conditions~\cite[Sect. 3.1]{Bem25}, or using iterative methods tailored to monotone games, such as Korpelevich's extragradient method~\cite{Kor76} or gradient play with its accelerated versions~\cite{NS11,KLL22} in the case $\mu>0$. The convergence of iterative algorithms often depends on the inverse of the Lipschitz constant $L(p)$ of the pseudogradient $F(x,p;\theta^\star)=A(p;\theta^\star)x+q(p;\theta^\star)+\nabla_x\Psi(x,p;\theta^\star)$, where $\theta^\star$ is the learned parameter vector, $A(p;\theta^\star)$ is parameterized as in~\eqref{eq:pseudogradient-matrix} and $\Psi$ is convex in $x$. 

In the special case of a quadratic game ($\Psi(x,p;\theta^\star)\equiv 0$), the Lipschitz constant of the pseudogradient is simply given by the spectral norm of the pseudogradient matrix $A$, i.e., $L(p)=\|A(p;\theta^\star)\|$. In the next proposition, we show how to compute an upper bound on $L(p)$ for a specific class of input-convex neural networks $\Psi(x,p;\theta)$. 
For further results on estimating Lipschitz constants of the gradient of neural networks, see, e.g.,~\cite[Theorem 4]{SF20} and~\cite{ESF24}. We will drop the explicit dependence on $p$ and $\theta^\star$ for notation simplicity.

\begin{proposition}
\label{prop:ICNN-lipschitz}
Let $\Psi(x)$ be an input-convex neural network with $Y$ layers, softplus activation $\sigma(t)=\log(1+e^t)$, and linear bypass:
\[
\begin{aligned}
z_1&=\sigma(W_0x+b_0),\quad
z_{k+1}=\sigma(W_kz_k+U_kx+b_k),\quad k=1,\ldots,Y-1
\\
\Psi(x)&=w_Y^\top z_Y+U_Yx+c
\end{aligned}
\]
where $W_k\geq0$ entrywise for $k=1,\ldots,Y-1$ and $w_Y\geq0$ enforce convexity. Define the following scalar quantities recursively:
\[
\begin{aligned}
G_1&=\|W_0\|,\quad &G_{k+1}&=\|W_k\|G_k+\|U_k\|\\
H_1&=\tfrac14G_1^2,\quad &H_{k+1}&=\tfrac14G_{k+1}^2+\|W_k\|H_k,\quad k=1,\ldots,Y-1.
\end{aligned}
\]
Then $\nabla\Psi$ is Lipschitz continuous with constant $L_\Psi\leq\|w_Y\|_2H_Y$.
\end{proposition}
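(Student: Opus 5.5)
Since $\Psi\in C^2$, it suffices to bound the spectral norm of the Hessian uniformly in $x$, i.e., to show $\|\nabla^2\Psi(x)\|\le\|w_Y\|_2H_Y$ for all $x$; the mean-value theorem then gives the Lipschitz bound on $\nabla\Psi$. The linear bypass $U_Yx+c$ has zero Hessian, so only $w_Y^\top z_Y$ matters. We write $a_1=W_0x+b_0$ and $a_{k+1}=W_kz_k+U_kx+b_k$ for the pre-activations, so that $z_k=\sigma(a_k)$ entrywise. The two facts about the softplus we use are $\sigma'(t)=1/(1+e^{-t})\in(0,1)$ and $\sigma''(t)=\sigma'(t)(1-\sigma'(t))\in(0,\tfrac14]$.

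\textbf{Step 1 (first-order bound).} Let $J_k=\partial z_k/\partial x\in\rr^{n_k\times n}$ be the layer Jacobians. We show by induction that $\|J_k\|\le G_k$. For the base case, $J_1=\operatorname{diag}(\sigma'(a_1))W_0$, and since $|\sigma'|\le1$ we get $\|J_1\|\le\|W_0\|=G_1$. For the inductive step, $J_{k+1}=\operatorname{diag}(\sigma'(a_{k+1}))(W_kJ_k+U_k)$, so $\|J_{k+1}\|\le\|W_k\|G_k+\|U_k\|=G_{k+1}$. The same argument bounds the Jacobian $\partial a_{k+1}/\partial x=W_kJ_k+U_k$ of the pre-activation by $G_{k+1}$, and $\partial a_1/\partial x=W_0$ by $G_1$.

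\textbf{Step 2 (second-order bound).} For a vector $v\ge0$ we define the weighted curvature $\mathcal H_k(v)=\sum_j v_j\nabla^2(z_k)_j$, which is symmetric. Differentiating $z_{k+1}=\sigma(a_{k+1})$ twice gives
\[
\nabla^2 (z_{k+1})_j=\sigma''((a_{k+1})_j)\,g_jg_j^\top+\sigma'((a_{k+1})_j)\sum_l (W_k)_{jl}\nabla^2(z_k)_l,
\]
where $g_j^\top$ is the $j$th row of $\partial a_{k+1}/\partial x$. Hence, for $v\ge0$,
\[
\mathcal H_{k+1}(v)=(\partial a_{k+1}/\partial x)^\top\operatorname{diag}(v\odot\sigma''(a_{k+1}))(\partial a_{k+1}/\partial x)+\mathcal H_k\bigl(W_k^\top(v\odot\sigma'(a_{k+1}))\bigr).
\]
The argument $u=W_k^\top(v\odot\sigma'(a_{k+1}))$ stays nonnegative because $W_k\ge0$, and it satisfies $\|u\|\le\|W_k\|\,\|v\|$. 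The first term has norm at most $\tfrac14\|v\|_\infty G_{k+1}^2\le\tfrac14\|v\|G_{k+1}^2$. This yields the induction hypothesis $\|\mathcal H_k(v)\|\le\|v\|H_k$. The base case holds since $\mathcal H_1(v)=W_0^\top\operatorname{diag}(v\odot\sigma''(a_1))W_0$, giving $\|\mathcal H_1(v)\|\le\tfrac14\|v\|G_1^2=\|v\|H_1$. Applying the result with $v=w_Y$ gives $\nabla^2\Psi=\mathcal H_Y(w_Y)$ and therefore $\|\nabla^2\Psi\|\le\|w_Y\|_2H_Y$.

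\textbf{Main obstacle.} The delicate point is the recursion bookkeeping: the curvature must be propagated as a functional of a weight vector rather than bounding each $\nabla^2(z_k)_j$ separately. Bounding the entries separately would introduce dimension-dependent factors through sums over $j$. The weighted formulation needs the nonnegativity of $W_k$ and $w_Y$, which keeps the weights nonnegative throughout. It also uses the estimates $\|\operatorname{diag}(d)\|\le\|d\|_\infty\le\|d\|_2$ and $\|v\odot\sigma'\|\le\|v\|$ to pass from the weight vector to its Euclidean norm cleanly at each layer.
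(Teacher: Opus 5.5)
Your proof is correct and gives the same constant, but the second-order bookkeeping is the adjoint of the paper's.

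The paper fixes two unit directions $u,v$ and propagates forward the vector $D^2z_k(x)[u,v]$ of second directional derivatives of all units in layer $k$. It uses the recursion $D^2z_k[u,v]=\sigma''(p_k)\odot(Dp_ku)\odot(Dp_kv)+\sigma'(p_k)\odot(W_{k-1}D^2z_{k-1}[u,v])$ together with $\|a\odot b\|_2\le\|a\|_2\|b\|_2$ to prove $\|D^2z_k[u,v]\|_2\le H_k$. It contracts with $w_Y$ only at the very end.

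You instead fix the output weights and pull them backward through the layers, bounding the matrix $\mathcal H_k(v)=\sum_j v_j\nabla^2(z_k)_j$ directly in operator norm. The two invariants are dual, since $\|\mathcal H_k(v)\|=\sup_{\|u\|=\|w\|=1}|v^\top D^2z_k[u,w]|$, so both yield $\|\nabla^2\Psi\|\le\|w_Y\|_2H_Y$. The paper's forward version gives a layerwise curvature bound that is uniform over all output weightings. Yours delivers the Hessian norm directly, without a final supremum over directions, and reads naturally as a backpropagation recursion.

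Your closing remark needs one correction: neither argument actually needs $W_k\ge0$ or $w_Y\ge0$. Your identity for $\mathcal H_{k+1}(v)$ is linear in $v$ and holds for weights of any sign. The estimate $\|G^\top\operatorname{diag}(d)G\|\le\|d\|_\infty\|G\|^2$ also holds for signed $d$. Restricting the induction to $v\ge0$ is therefore harmless but unnecessary. As in the paper's proof, the Lipschitz bound holds for any network of this architecture, whether or not the convexity constraints are imposed.
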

\begin{proof}
Let $p_k=W_{k-1}z_{k-1}+U_{k-1}x+b_{k-1}$, $z_k=\sigma(p_k)$, and consider its Jacobian $Dp_k=W_{k-1}Dz_{k-1}+U_{k-1}$ w.r.t.~$x$. We want to show that $\|Dp_k(x)\|\leq G_k$ for all $x$ by induction on $k$. 
Since $p_1=W_0x+b_0$ is affine in $x$, the base case $\|Dp_1\|=\|W_0\|=G_1\leq G_1$ trivially holds.
Assume by induction that $\|Dp_{k-1}(x)\|\leq G_{k-1}$. Since $z_{k-1}=\sigma(p_{k-1})$ is applied entrywise, $Dz_{k-1}=\operatorname{diag}\bigl(\sigma'(p_{k-1})\bigr)Dp_{k-1}$ and, as $\sigma'(t)\in(0,1)$ for every $t$, $\|Dz_{k-1}(x)\|\leq\|Dp_{k-1}(x)\|\leq G_{k-1}$ for all $x$. The triangle inequality and submultiplicativity of the spectral norm then give
\[
\begin{aligned}
\|Dp_k(x)\|&\leq\|W_{k-1}Dz_{k-1}(x)\|+\|U_{k-1}\|\leq\|W_{k-1}\|\,\|Dz_{k-1}(x)\|+\|U_{k-1}\|\\
&\leq\|W_{k-1}\|G_{k-1}+\|U_{k-1}\|=G_k
\end{aligned}
\]
for all $x$, which closes the induction. 

Let us now compute a bound on the second derivative of one ICNN layer. Consider two arbitrary unit vectors $u,v$ and let $D^2z_k(x)[u,v]\in\rr^{\dim z_k}$ denote the vector with entries $u^\top\nabla^2(z_{k}(x)_i)v$. It is easy to show that
\[
    D^2z_k(x)[u,v]=\sigma''(p_k)\odot(Dp_ku)\odot(Dp_kv)+\sigma'(p_k)\odot\bigl(W_{k-1}D^2z_{k-1}(x)[u,v]\bigr).
\]
Bounding $\sigma''\leq\tfrac14$, $\sigma'\leq1$, and using $\|a\odot b\|_2\leq\|a\|_2\|b\|_2$ and submultiplicativity of the spectral norm,
\[
\begin{aligned}
\|D^2z_k(x)[u,v]\|_2&\leq\tfrac14\|Dp_ku\|_2\|Dp_kv\|_2+\|W_{k-1}\|\,\|D^2z_{k-1}(x)[u,v]\|_2\\
&\leq\tfrac14G_k^2+\|W_{k-1}\|H_{k-1}=H_k
\end{aligned}
\]
which is the claimed induction ($H_1$ is the base case, $z_1$ having no incoming $W$-term). Since $\Psi(x)=w_Y^\top z_Y(x)+U_Yx+c$ with the bypass $U_Yx$ affine and hence contributing no curvature,
\[
|u^\top\nabla^2\Psi(x)v|=|w_Y^\top D^2z_Y(x)[u,v]|\leq\|w_Y\|_2\|D^2z_Y(x)[u,v]\|_2\leq\|w_Y\|_2H_Y
\]
for all unit vectors $u,v$ and all $x$, so $\|\nabla^2\Psi(x)\|\leq\|w_Y\|_2H_Y$.
\end{proof}

\noindent Finally, a bound on the Lipschitz constant $L$ of the pseudogradient $F$ for a given surrogate game instance follows from the triangle inequality:
\begin{equation}
    L\leq\|w_Y\|_2H_Y+\|A\|.
\label{eq:L(p)}
\end{equation}

\section{Numerical examples}
\label{sec:numerical-examples}
In all the experiments reported below, the exact equilibrium $x^\star(p)$ of the true game at each test value of $p$ is computed by the KKT-residual method implemented in the NashOpt package~\cite{Bem25} that,
for unconstrained games, reduces to finding a zero of the pseudogradient.

All numerical tests were run in Python 3.11
on a MacBook Pro with Apple M5 Max (18 CPU cores). Models are trained using the \texttt{jax-sysid} package~\cite{Bem25b}, by running 1000 Adam iterations followed by at most 5000 L-BFGS iterations from 18 random initializations in parallel, keeping the model with the smallest loss on a separate validation dataset of $K_{\rm val}$ samples.
Input-convexity of neural networks is enforced by applying the softplus function both to the weights of the layers that are required to be nonnegative and to the hidden layers. Unless otherwise specified, $\gamma=10$ and $\rho=10^{-8}$. 

To verify the quality of approximate solutions $\hat x(p)$ for a given $p$, we
evaluate each best response (BR) by fixing $x_{-i}=\hat x_{-i}(p)$ and optimizing
the underlying agents costs $\tilde J_i$, possibly subject to shared constraints (unless specified otherwise), NE of the underlying true game $\{\tilde J_i\}_{i=1}^N$ are evaluated via the minimization of the squared pseudogradient norm $\|\nabla \tilde F(x,p)\|^2$ or, in the case of shared constraints, a variational GNE is evaluated by minimizing the KKT residuals as described in~\cite{Bem25}. 
Both the BR and NE errors reported in the tables are evaluated as $\frac{1}{NK_{\rm test}}\sum_{k=1}^{K_{\rm test}}\sum_{i=1}^{N}\|\hat x_i(p_k) - \bar x_i(p_k)\|_2$ over the test samples $p_k$, where $\bar x_i(p_k)$ are either the ground truth BR to $\hat x_{-i}(p)$ or NE, respectively. 

The code to reproduce the examples reported in this section is available at \url{https://github.com/bemporad/learn_monotone_games}.

\subsection{Convex quadratic games}
We consider a randomly generated parametric quadratic game~\eqref{eq:Ji-quadratic} with $N=4$ agents, $n_i=2$ decision variables each, and a vector parameter $p\in[-1,1]^2$ entering only the linear terms, $q_i(p)=c_{1i}p+c_{2i}$, while $A_{ii}$, $A_{i,-i}$ are constant. The blocks $A_{ii}$, $A_{i,-i}$, $c_{1i}$, $c_{2i}$ are drawn at random, and the diagonal blocks are shifted so that the symmetric part of the pseudogradient Jacobian $A$ has minimum eigenvalue exactly $\mu=0$, i.e., the game is monotone but not strongly monotone.

We first fit a monotone quadratic cost model of the form~\eqref{eq:quadratic_model} with $\mu=0$, each nonzero entry in $C,D$ constant with respect to $p$, and $q$ affine in $p$. We use $K=500$ cost samples $(x_k,p_k,\{\tilde J_i(x_k,p_k)\}_{i=1}^N)$, $k=1,\ldots,K$, with $x_k\in[-2,2]^8$, by minimizing $\LL_{DJ}(\theta)+\frac{\rho}{2}\|\theta\|^2$, $\theta\in\rr^{572}$. 
No NE-sample data is used ($K_{\rm NE}=0$) and
$K_{\rm val}=100$, $K_{\rm test}=200$ are generated similarly as held-out validation and test samples, respectively. The learned model is validated by computing its Nash equilibria at 50 different random test values of $p$ by solving the linear system $F(x,p)=0$, and comparing it against the true NE of the underlying game generating the data, obtained similarly 
for each test $p$.
The results are shown in the first row of Table~\ref{tab:quad_game}, which reports the mean BR and NE errors on test data.

Then, we address the inverse-learning problem described in Section~\ref{sec:inverse-quadratic}, generating the same number $K,K_{\rm val}$, and $K_{\rm test}$ of {\it best-response} data $(x_{k,-i_k},p_k,\bar x_{k,i_k})$. We compare three approaches: ($i$) the SDP~\eqref{eq:sdp-monotone} solved directly (``SDP''); ($ii$) the least-squares/SDP cascade~\eqref{eq:ls}--\eqref{eq:two-stage} (``LS+SDP''); ($iii$) the nonlinear least-squares problem~\eqref{eq:nls-monotone} 
with a small regularization $\rho=10^{-12}$ (``NLS''), all imposing $\mu=0$. 

The results are also summarized in Table~\ref{tab:quad_game}. The LS+SDP cascade is three orders of magnitude faster than solving the SDP directly, since it only requires an unconstrained linear least-squares fit followed by a small SDP. The NLS approach is the best in solving the inverse learning problem, but only marginally.

\begin{table}[t]
  \centering
  \setlength{\tabcolsep}{4.5pt}
  \renewcommand{\arraystretch}{1.} 
  \begin{tabular}{l|c|rrr}
  \hline
  method & data & time (s) & BR error & NE error \\
  \hline
  NLS & $J_{i,k}$ & 8.7263 & $2.62 \cdot 10^{-7}$ & $6.05 \cdot 10^{-7}$ \\
  SDP & $\bar x_{i,k}$ & 1.5016 & $2.79 \cdot 10^{-8}$ & $6.58 \cdot 10^{-8}$ \\
  LS+SDP & $\bar x_{i,k}$ & \textbf{0.0065} & $1.70 \cdot 10^{-8}$ & $3.30 \cdot 10^{-8}$ \\
  NLS & $\bar x_{i,k}$ & 8.3426 & $\mathbf{1.14 \cdot 10^{-8}}$ & $\mathbf{3.02 \cdot 10^{-8}}$ \\
  \hline
  \end{tabular}
  \caption{Comparison of learning approaches on randomly-generated parametric quadratic games.}
  \label{tab:quad_game}
\end{table}

\subsection{Monotone counterexample}
Consider the following parametric version of the monotone game of Remark~\ref{rmk:splitting_not_necessary}
\begin{equation}
\begin{aligned}
J_1(x,p)=&\tfrac12(x_1-p_1)^2+(x_1-p_1)\sin (x_2-p_2)\\
J_2(x,p)=&\tfrac12(x_2-p_2)^2-(x_2-p_2)\sin (x_1-p_1)
\end{aligned}
\label{eq:parametric-counterexample}
\end{equation}
that does not admit the exact parameterization of Section~\ref{sec:monotone_parametrization}. Its unconstrained NE is $x^\star(p)=p$. We learn the costs from $K=2000$ samples of $\DD_J$, drawn uniformly from $x\in[-\tfrac{\pi}{2},\tfrac{\pi}{2}]^2$ and $p\in[-\tfrac{\pi}{4},\tfrac{\pi}{4}]^2$, by minimizing~\eqref{eq:learning-problem-point} with $\LL_D=\LL_{DJ}$ as in~\eqref{eq:training_problem-J} with $\mu=0.2$. 

We compare four approaches: ($i$) monotonicity by construction, with costs parameterized as in~\eqref{eq:J_i-form}, $A(p)$ as in~\eqref{eq:pseudogradient-matrix}, $\Psi$ given by an input-convex neural network, and $C(p)$, $D(p)$, $q(p)$, $\phi_i$ by neural networks ($\LL_M=0$); ($ii$)--($iv$) input-convex neural-network costs $J_i$ with the monotonicity penalties $\LL_{M1}$~\eqref{eq:monotonicity-violation-1}, $\LL_{M2}$~\eqref{eq:monotonicity-violation-2}, and $\LL_{M3}$~\eqref{eq:monotonicity-violation-3}, respectively, with $\gamma=10^3$ and $M=2000$ unlabeled samples in $\DD_M$ ($M=50$ for $\LL_{M1}$, whose evaluation cost grows quadratically with $M$). All networks have two hidden layers of 4 neurons each and $K_{\rm val}=1000$ validation samples are used to select the best model. The learned NE is computed by solving $F(x,p;\theta)=0$ at 50 random test values of $p$ and compared with the exact NE $x^\star(p)=p$. Average fitting of the agents' costs is evaluated over $K_{\rm test}=2000$ test samples. 

As the penalty methods do not guarantee monotonicity, we also compute the minimum eigenvalue of the symmetric part of the learned pseudogradient Jacobian at the test samples and over the whole box by global optimization,
using the algorithm \texttt{DIRECT}~\cite{JSW98,JM21} for global optimization (absolute tolerance $10^{-8}$, relative tolerance $10^{-5}$, max 2000 evaluations). 

The obtained results are summarized in Table~\ref{tab:counterexample}. The best fit of the costs is obtained by the penalty methods $\LL_{M1}$ and $\LL_{M2}$, which also achieve the smallest NE error. However, they do not guarantee the imposed strong monotonicity $\mu=0.2$. The monotonicity-by-construction approach achieves a slightly worse fit of the costs and NE error, but satisfies the imposed monotonicity requirement. The penalty method $\LL_{M3}$ achieves a worse fit of the costs and NE error and a larger monotonicity constant. 
We report that lower values of $\mu$, such as $\mu=0$, may lead the penalty-based methods to learn non-monotone games, due to the limited density of samples in the training set and the value of the penalty $\gamma$.

\begin{table}[t]
  \centering
  \setlength{\tabcolsep}{5pt}
  \renewcommand{\arraystretch}{1.} 
   \begin{tabular}{l|rrrr}
  \hline
  method & time (s) & $\lambda_{\rm min}$ (test/global) & R$^2$ (\%) & NE error \\
  \hline
  monotonic~\eqref{eq:J_i-form} & \textbf{39.29} & $0.3087$ / $0.2741$ & 98.78 & $0.0424$ \\
  ICNN + $\LL_{M1}$~\eqref{eq:monotonicity-violation-1} & 50.93 & $0.0662$ / $0.0217$ & \textbf{99.99} & $\mathbf{0.0039}$ \\
  ICNN + $\LL_{M2}$~\eqref{eq:monotonicity-violation-2} & 108.38 & $0.1154$ / $0.0910$ & 99.98 & $0.0053$ \\
  ICNN + $\LL_{M3}$~\eqref{eq:monotonicity-violation-3} & 128.44 & $\mathbf{0.5105}$ / $\mathbf{0.4856}$ & 96.75 & $0.0528$ \\
  \hline
  \end{tabular}
  \caption{Comparison of learning approaches on example~\eqref{eq:parametric-counterexample}.}
      \label{tab:counterexample}
\end{table}

\subsection{Convex nonlinear game}
We consider the parametric variant of the internet switching model described in~\cite[Ex. A.1]{FK09}:
\begin{equation}
    \begin{aligned}
    J_i(x,q) = &-\frac{x_i}{x_1+...+x_N}(1-(x_1+...+x_N)q)\\
            &\sum_{i=1}^Nx_i\leq 1/q,\quad  x_i \geq 0.01
    \end{aligned}
\label{eq:A1-FK09}
\end{equation}
where $\ell=0.01$ and $q\in[0.5,1/(N\ell)]$ is treated as the parameter of the game.
This variant, with parameter $p=1/q$, was suggested in~\cite{BT26}, showing the convexity of costs $J_i$ and 
the analytical expression of the best response $\bar x_i(x_{-i},q)$. We generate random feasible
$K=K_{\rm val}=K_{\rm test}=4000$ training, validation, and test samples, respectively,
as described in~\cite{BT26}. We test both a pure quadratic monotone model~\eqref{eq:quadratic_model} with $C(p)$, $D(p)$, $q(p)$ given by (input-convex) neural networks, and a nonlinear monotone model given by adding an input-convex neural network potential term $\Psi$
with 2 hidden layers of 20 neurons each. No additional term $\phi_i$ is introduced and we set $\LL_M=0$. 

We evaluate and compare the best responses of the learned models against the exact best responses on a further 200 random test values of $p$, and solve both the GNEP~\eqref{eq:A1-FK09} and the GNEP with the learned models, under the same constraints, on 50 random test values of $p$. The results are summarized in Table~\ref{tab:nl_internet_game}. The learned models achieve a good fit of the costs, measured by the R$^2$-score on the test set averaged on all agents' costs, BR and NE errors.

\begin{table}[t]
  \centering
  \setlength{\tabcolsep}{4.5pt}
  \renewcommand{\arraystretch}{1.} 
    \begin{tabular}{c|rrrr}
  \hline
  potential & time (s) & R$^2$ (\%) & BR error & NE error \\
  \hline
  none & \textbf{72.5861} & 94.62 & $1.79 \times 10^{-2}$ & $\mathbf{3.60 \times 10^{-3}}$ \\
  NN & 300.6664 & \textbf{95.40} & $\mathbf{1.43 \times 10^{-2}}$ & $4.69 \times 10^{-3}$ \\
  \hline
  \end{tabular}
  \caption{Learned generalized Nash equilibrium models for the internet switching game with $N=5$ players.}
  \label{tab:nl_internet_game}
\end{table}

\subsection{Comparison with approximate explicit GNE method}
We compare the approach of Section~\ref{sec:monotone_parametrization} against the explicit-solution approach of~\cite{BT26}, which learns a neural-network approximation $\hat x(p)$ of the parametric map $p\mapsto x^\star(p)$ from best-response samples and knowledge of the true agents' costs $J_i$. 

\subsubsection{Parametric convex quadratic GNEP with shared linear constraints}
We first consider a randomly generated parametric monotone linear-quadratic GNEP (LQ-GNEP)~\eqref{eq:Ji-quadratic} with $N=3$ agents, $n_i=2$ decision variables each, $m=16$ shared inequality and $q=2$ shared equality constraints (both known), and strong-monotonicity constant $\mu=0.1$. Two parameters $p\in[-1,1]^2$ enter both the linear cost terms, $c_i(p)=c_{i0}+F_ip$, and the shared constraint right-hand side, $b(p)=b_0+Sp$ with $S>0$.

Using $K=2000$ best-response training samples, $K_{\rm val}=1000$ validation, and $K_{\rm test}=1000$ test samples generated as in~\cite{BT26} by projecting randomly-generated vectors $v_k\in[-10,10]^6$ on the feasible set, we train the explicit map $\hat x(p)$ of~\cite{BT26} using a NN with two hidden layers (30,20) neurons with swish activation per agent to approximate its associated value function, and a NN with (30,20) neurons and ReLU activation with linear bypass for the solution $\hat x(p)\in\rr^6$ (the latter has 888 weights to learn). We then fit a quadratic surrogate~\eqref{eq:quadratic_model}, with constant $A$ and $\mu=0$, on the same $(x_k,p_k)$ samples and the corresponding costs $\tilde J_i(x_k,p_k)$, by minimizing $\LL_{DJ}(\theta)+\frac{\rho}{2}\|\theta\|^2$. Note that only mere monotonicity is imposed on the surrogate, as the true $\mu$ is assumed unknown. At test time, the surrogate GNE is recovered online for each $p$ by solving the QP-GNEP with the same shared constraints $Ax\leq b(p)$, $Ex=h$ via the proximal-QP method of~\cite{BT26b} (using the \texttt{daqp} solver~\cite{ABA22b}).

Table~\ref{tab:surrogate_vs_explicit} compares the solutions obtained against the ground-truth GNE, computed by the same proximal-QP method, over the $1000$ test parameters. The surrogate is superior on every metric except online evaluation time (a forward pass through the solution NN $\hat x(p)$ vs. solving a QP to evaluate the GNE for each tested $p$). In particular, solving the surrogate GNE online always returns a feasible equilibrium, whereas the explicit map of~\cite{BT26}, being an unconstrained function approximator of a constrained map, slightly violates the shared constraints.
Note also that the approach of~\cite{BT26} requires fitting convex piecewise-quadratic value functions, while the approach of Section~\ref{sec:monotone_parametrization} has the easier task of fitting the agent's convex quadratic costs. On the other hand, the former approach is more general, as it can be applied to any parametric nonconvex GNEP, while the latter is limited to fitting parametric convex monotone GNEPs.

\begin{table}[t]
\centering
\setlength{\tabcolsep}{5pt}
\renewcommand{\arraystretch}{1.} 
\begin{tabular}{l|r|r}
\hline
 & explicit~\cite{BT26} & surrogate\\
\hline
best-response error (mean) & $ 3.4646 \cdot 10^{-1} $ & $ \mathbf{5.2079 \cdot 10^{-14}} $ \\
constraint violation (mean) & $ 3.4241 \cdot 10^{-4} $ & $ \mathbf{6.3303 \cdot 10^{-15}} $ \\
constraint violation (max) & $ 1.1252 \cdot 10^{-1} $ & $ \mathbf{2.5821 \cdot 10^{-13}} $ \\
opt.cost/cost fit (average R$^2$ score) & $ 0.9963 $ & $ \mathbf{1.0000} $ \\
fitting time (s) & $ 68.93 $ & $ \mathbf{11.92} $ \\
per-sample solve time ($\mu$s) & $ \mathbf{22.82} $ & $ 297.99 $ \\
\hline
\end{tabular}
\caption{Approximate explicit GNE solution~\cite{BT26} vs. LQ-GNE surrogates on a random strongly-monotone LQ-GNEP.}
\label{tab:surrogate_vs_explicit}
\end{table}

\subsubsection{Parametric nonlinear convex GNEP with shared linear constraints}
We repeat the comparison on a parametric nonlinear convex underlying game defined by
\begin{equation}
f_i(x,p)=\log\left(\sum_{r=1}^3e^{a_{ir}^\top x_i+b_{ir}^\top x_{-i}+\gamma_{ir}^\top p}\right)+\bigl(1+p_1+p_2\bigr)\sum_{i=1}^N\sum_{j=1}^{n_i}(\tfrac{1}{10}(x_i)_j)^4
\label{eq:nl-convex-GNE}
\end{equation}
with $N=2$ agents, $n_i=2$ decision variables each, $2$ parameters $p\in[.5,1]^2$, $a_{ir},b_{ir},\gamma_{ir}$ drawn at random, $m=10$ shared inequality constraints $A x\leq b(p)$, $b(p)=b_0+Sp$ with $S\geq 0$, and box constraints $-2\leq x\leq 2$. The game defined by~\eqref{eq:nl-convex-GNE} is not monotone for the selected values of vectors $b_{ir}$.

Both the training data for~\cite{BT26}'s explicit map and the ground-truth best responses used to score all methods are computed by a generic penalty-constrained L-BFGS-B solve of each agent's subproblem (via \texttt{nashopt.GNEP}~\cite{Bem25} 
We fit \cite{BT26}'s explicit map $\hat x(p)$ as in the previous section, using $K=2000$ training, $K_{\rm val}=1000$ validation, and $K_{\rm test}=1000$ test samples, and a monotone surrogate~\eqref{eq:quadratic_model} whose matrices $C(p)$, $D(p)$ and offset $q(p)$ are emitted by a hypernetwork (two hidden layers of $5$ neurons each, ReLU activation), augmented with an input-convex potential $\Psi(x,p)$ (two hidden layers of $5$ neurons each, softplus activation) common to all agents' costs. 

At test time, a v-GNE of the surrogate monotone game is recovered by running 
Korpelevich's extragradient method~\cite{Kor76} with step size $0.99/L(p)$, where the Lipschitz constant $L(p)$ of the pseudogradient of the learned surrogate game is computed as described in Proposition~\ref{prop:ICNN-lipschitz}. To properly initialize the method, we fit a second explicit map $\hat x_s(p)$ on value-function samples {\it of the surrogate game}, and use $\hat x_s(p)$ to initialize the extragradient method, which is then run for 50 iterations. 

Table~\ref{tab:surrogate_vs_explicit_nl} compares the two solutions on the test parameters. The surrogate attains a miuch more accurate best-response, near-zero constraint violation, and a higher cost-fit R$^2$ than the explicit map, at the cost of a more expensive online evaluation via Korpelevich's method rather than a single forward NN pass. 

\begin{table}[t]
\centering
\setlength{\tabcolsep}{5pt}
\renewcommand{\arraystretch}{1.} 
\begin{tabular}{l|r|r}
\hline
 & explicit~\cite{BT26} & surrogate\\
\hline
best-response error (mean) & $ 4.2598 \cdot 10^{-2} $ & $ \mathbf{1.2969 \cdot 10^{-5}} $ \\
constraint violation (mean) & $ 6.7551 \cdot 10^{-6} $ & $ \mathbf{9.3644 \cdot 10^{-9}} $ \\
constraint violation (max) & $ 3.5215 \cdot 10^{-3} $ & $ \mathbf{9.5764 \cdot 10^{-9}} $ \\
opt.cost/cost fit (average R$^2$ score) & $ 0.9834 $ & $ \mathbf{0.9970} $ \\
fitting time (s) & $ \mathbf{10.56} $ & $ 44.52 $ \\
per-sample solve time (s) & $ \mathbf{1.92 \cdot 10^{-5}} $ & $ 1.74 \cdot 10^{-1} $ \\
\hline
\end{tabular}
\caption{Approximate explicit GNE solution~\cite{BT26} vs. monotone surrogate (with potential term) on the nonlinear convex game~\eqref{eq:nl-convex-GNE}.}
\label{tab:surrogate_vs_explicit_nl}
\end{table}

\subsection{Inverse learning of a monotone game-theoretic controller}

\begin{figure}[t]
\begin{center}
\includegraphics[width=0.7\hsize]{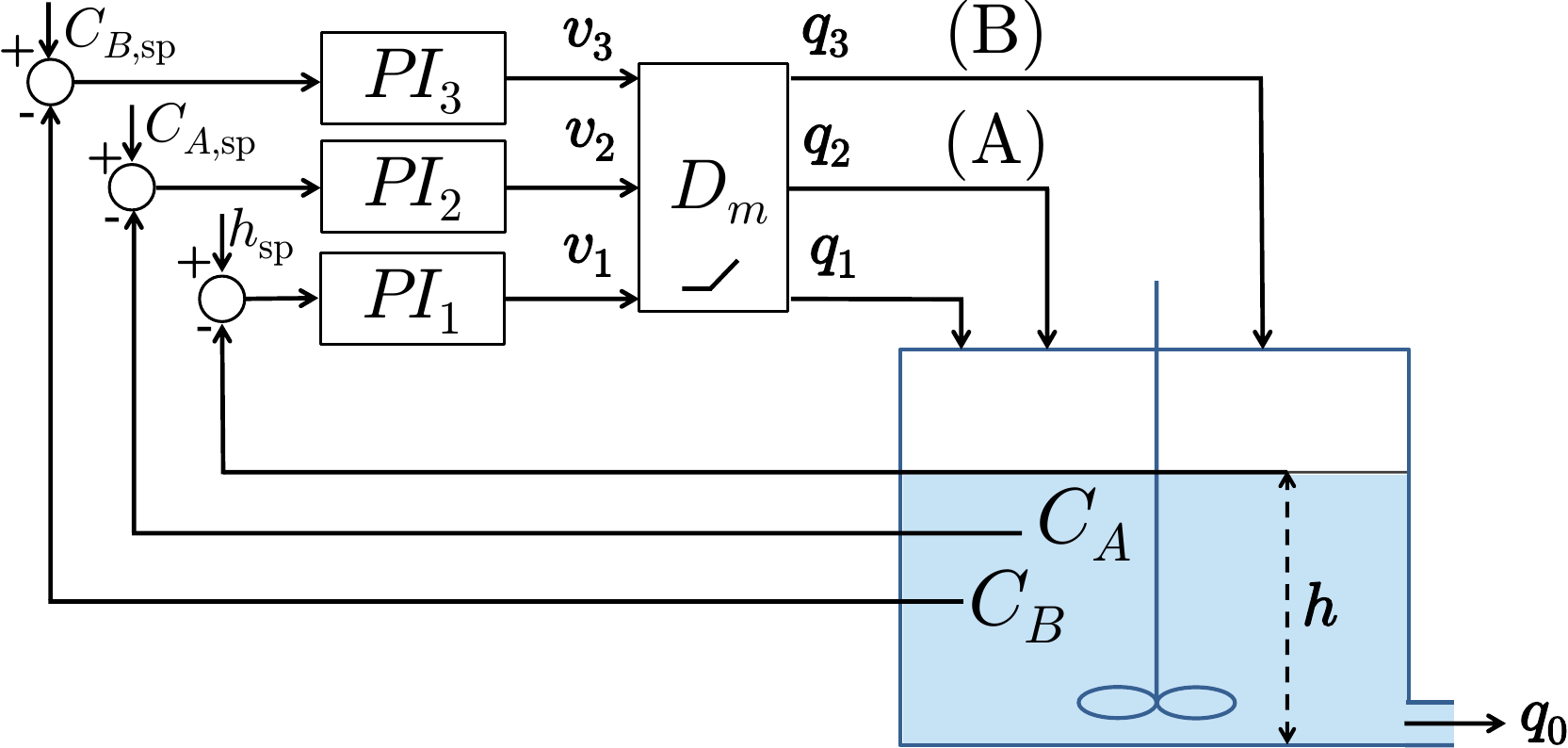}
\end{center}
\caption{Three-input stirred tank reactor.}
\label{fig:cstr}
\end{figure}

\begin{table}
\renewcommand{\arraystretch}{1.} 
\begin{center}
{\small
\begin{tabular}{@{}lll@{}}
 & Description & Nominal value \\
\hline
$h(t)$ & liquid level in the tank & $0.6$ [m] \\
$V(t)$ & liquid volume in the tank, $V = A h$ & $0.6$ [m$^3$] \\
$C_A(t)$ & concentration of reactant $A$ in the tank & $0.5$ [mol/L] \\
$C_B(t)$ & concentration of reactant $B$ in the tank & $0.3$ [mol/L] \\
$q_1(t)$ & flow rate of pure water (input) & $0.1086$ [m$^3$/s] \\
$q_2(t)$ & flow rate of the $A$-feed solution (input) & $0.1792$ [m$^3$/s] \\
$q_3(t)$ & flow rate of the $B$-feed solution (input) & $0.1770$ [m$^3$/s] \\
$q_{\mathrm{out}}(t)$ & outlet flow rate & $0.4648$ [m$^3$/s] \\
$C_{A,\mathrm{in}}$ & concentration of $A$ in stream 2 (parameter) & $2.0$ [mol/L] \\
$C_{B,\mathrm{in}}$ & concentration of $B$ in stream 3 (parameter) & $1.5$ [mol/L] \\
$S$ & tank cross-sectional area (parameter) & $1.0$ [m$^2$] \\
$c_v$ & Torricelli discharge coefficient (parameter) & $0.6$ [m$^{2.5}$/s] \\
$k$ & reaction rate constant (parameter) & $1.4$ [L/(mol\,s)] \\
\hline
\end{tabular}
}
\end{center}
\caption{Physical quantities and parameters of the stirred-tank reactor example and their nominal values.}
\label{tab:cstr-parameters}
\end{table}

We consider the three-input stirred tank depicted in Figure~\ref{fig:cstr}. The tank is fed by a pure-water stream $q_1$ and two reactant-feed streams $q_2$ (species $A$) and $q_3$ (species $B$) draining through 
an orifice, with a second-order reaction $A+B\to\text{product}$. 
The dynamics of the states $\xi=[h\ C_A\ C_B]^\top$ 
under the input $u=[q_1\ q_2\ q_3]^\top$ are described by the following system of nonlinear 
ordinary differential equations:
\begin{equation}
\left\{
\begin{aligned}
\dot h &= \frac{1}{S}(q_1 + q_2 + q_3 - c_v\sqrt{h}) \\
\dot C_A &= \frac{q_2(C_{A,\mathrm{in}}-C_A) - (q_1+q_3)C_A}{S h} - k C_A C_B \\
\dot C_B &= \frac{q_3(C_{B,\mathrm{in}}-C_B) - (q_1+q_2)C_B}{S h} - k C_A C_B 
\end{aligned}
\right.
\label{eq:cstr_dynamics}%
\end{equation}
where each physical quantity is described in Table~\ref{tab:cstr-parameters}. We control the system by three decentralized single-input single-output PI loops regulating the three outputs $h$, $C_A$, and $C_B$ to the corresponding setpoints $h_{\rm sp}$, $C_{A,\rm sp}$, and $C_{B,\rm sp}$ by manipulating the three flows $q_1$, $q_2$, and $q_3$,
respectively. To this end, we linearize~\eqref{eq:cstr_dynamics} and get $\dot\xi = A_c (\xi-\xi_0) + B_c (u-u_0)$, where $\xi_0$, $u_0$ are the nominal operating points also reported in Table~\ref{tab:cstr-parameters}. To have each PI loop treat the other inputs as measured disturbances, we decouple the three loops by a static feedforward matrix $D_m = B_c^{-1}\mathrm{diag}(B_c)$. The resulting PI control laws are:
\begin{subequations}
\begin{align}
e(t) &= \begin{bmatrix} h_{\rm sp}(t)-h(t)\\ C_{A,\rm sp}(t)-C_A(t)\\ C_{B,\rm sp}(t)-C_B(t)\end{bmatrix} \qquad
\begin{aligned}
v(t) &= K_p\,e(t) + K_i\,I(t)\\[.5em]
\dot I_i(t)&=e_i(t) \label{eq:pi-law}
\end{aligned}\\
u(t) &= \max\big(u_0 + D_m\,v(t), 0\big) \label{eq:pi-decoupling}
\end{align}%
\label{eq:pi-controller}%
\end{subequations}
with $K_p=\mathrm{diag}(1.5492,\,0.0956,\,0.0737)$, $K_i=\mathrm{diag}(0.6000,\,0.1142,\,0.1087)$.

We generate the training, validation, and test data by simulating the decentralized PI closed loop~\eqref{eq:cstr_dynamics}--\eqref{eq:pi-controller} for $T_{\rm sim}=1999$\,s under piecewise-constant random setpoints $h_{\rm sp},C_{A,\rm sp},C_{B,\rm sp}$ that change every $50$\,s to a new value drawn uniformly around $h_0,C_{A0},C_{B0}$, sampling the state, setpoints, and PI integrator states every $T_s=1$\,s. Our working assumption is to consider each of the resulting $T_{\rm sim}/T_s+1=2000$ input samples as a best-response $\bar x_k=[q_1\ q_2\ q_3](t_k)$ to the parameter $p_k=[h,C_A,C_B,h_{\rm sp},C_{A,\rm sp},C_{B,\rm sp},I_h,I_A,I_B](t_k)$, per agent $i\in\{1,2,3\}$, for a total of $3\times2000=6000$ samples, which we split randomly into $80\%$ training, $10\%$ validation, and $10\%$ test sets ($4800$, $600$, and $600$ rows, respectively).

Next, we learn a monotone quadratic game-theoretic model~\eqref{eq:J_i-form} of the three agents' best responses by solving the inverse problem~\eqref{eq:sdp-monotone} with $\mu=0$, $\LL_M=0$, using the least-squares/SDP cascade~\eqref{eq:ls}--\eqref{eq:two-stage}, obtaining an average best-response error of $1.55 \cdot 10^{-4}$. The corresponding symmetric part of the learned constant pseudogradient matrix $A$ is positive semidefinite with eigenvalues $0.6135$, $1.1273$, and $1.2592$. Finally, we test the closed-loop system obtained by replacing the three PI loops with a game-theoretic controller that computes the GNE of the learned monotone game at each time step under the shared constraints $0.4\leq q_1+q_2+q_3\leq 0.5$  and local constraints $q_i\geq 0$, $i=1,2,3$ using the proximal-QP method proposed in~\cite{BT26b} and the DAQP solver~\cite{ABA22b} to solve each QP instance. The results are shown in Figure~\ref{fig:cstr-control}. The learned game-theoretic controller achieves a performance very close to the original PI controller within the admissible range of total flow constraints,
and is able to enforce the shared constraint on the total flow in a game-theoretic manner, which was not possible with the decentralized PI controller.

\begin{figure}
    \centering
    \begin{subfigure}{.7\hsize}
        \centering
        \includegraphics[width=\hsize]{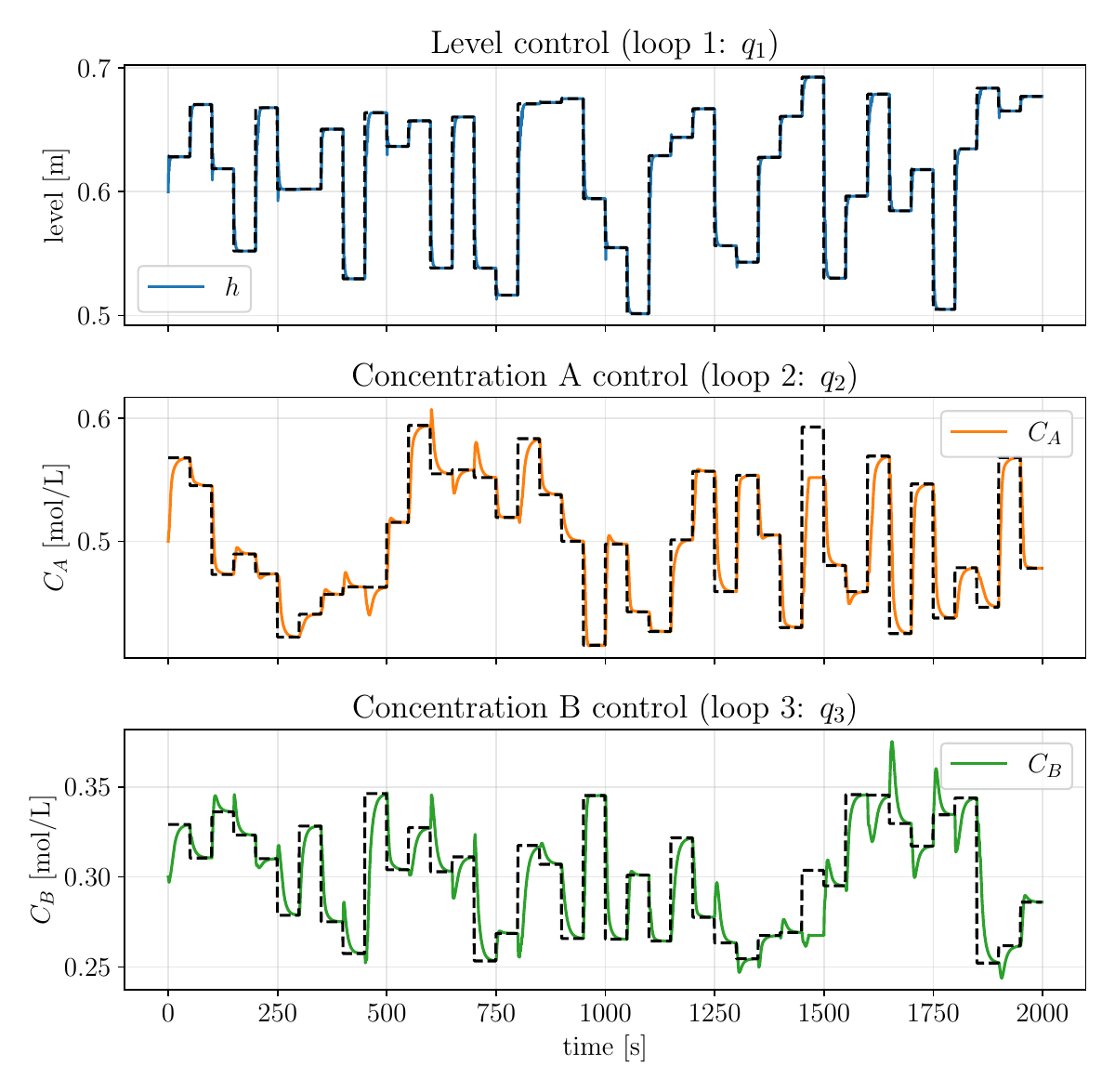}
        \caption{Controlled outputs.}
        \label{fig:cstr-control-outputs}
    \end{subfigure}
    \begin{subfigure}{.7\hsize}
        \centering
        \includegraphics[width=\hsize]{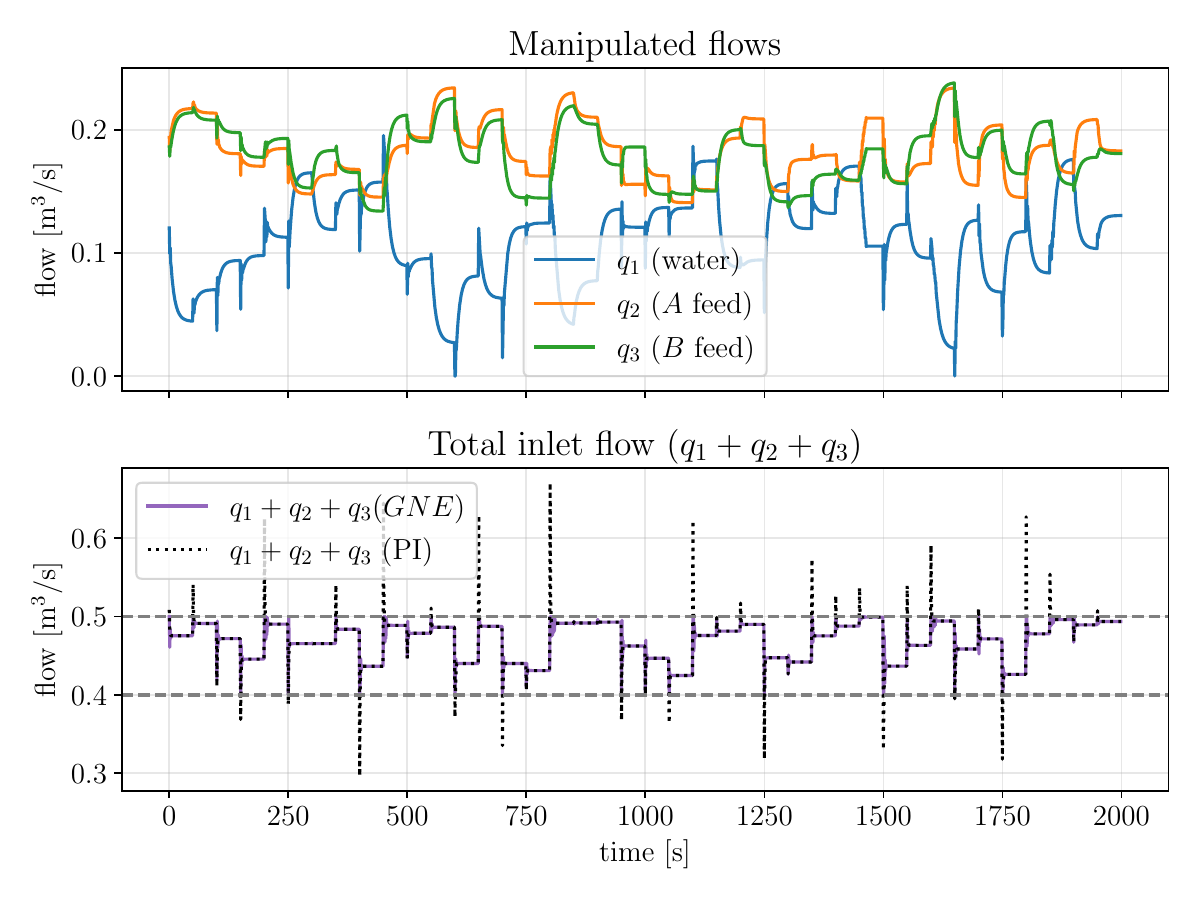}
        \caption{Command inputs (GNE vs PI control).}
        \label{fig:cstr-control-inputs}
    \end{subfigure}
    \caption{Closed-loop response of the stirred-tank reactor under the discrete-time GNE-based game-theoretic controller.}
    \label{fig:cstr-control}
\end{figure}

\section{Conclusions}
\label{sec:conclusions}
We addressed the problem of learning a parametric NE problem, directly from agents' cost values or from their best responses, satisfying given monotonicity requirements for all parameter values. A penalty-based method promotes monotonicity on training data without certifying it elsewhere, while a structured parameterization, based on a representation of the pseudogradient as a convex potential plus a skew-symmetric affine term, guarantees monotonicity by construction and covers all monotone convex quadratic games, whose inverse learning reduces to an SDP. Numerical results confirm that, in spite of the limitations of the chosen model classes, convex monotone parametric games can be learned rather accurately.

Future work includes extending the representation to other classes of learnable monotone games that include state-dependent skew-symmetric terms, and to applications of the proposed methods to learn monotone game-theoretic model predictive control laws from observed input/state trajectories.


\begin{thebibliography}{10}

\bibitem{AABBDK19}
A.~Agrawal, B.~Amos, S.~Barratt, S.~Boyd, S.~Diamond, and J.Z. Kolter.
\newblock Differentiable convex optimization layers.
\newblock {\em Advances in neural information processing systems}, 32, 2019.

\bibitem{ABA22b}
D.~Arnstr\"{o}m, A.~Bemporad, and D.~Axehill.
\newblock A dual active-set solver for embedded quadratic programming using recursive {LDL$^{\rm T}$} updates.
\newblock 67(8):4362--4369, 2022.

\bibitem{Bau16}
D.~Bauso.
\newblock {\em Game Theory with Engineering Applications}.
\newblock SIAM, 2016.

\bibitem{belgioioso2020}
G.~Belgioioso, A.~Ned\'i\v{c}, and S.~Grammatico.
\newblock Distributed generalized {Nash} equilibrium seeking in aggregative games on time-varying networks.
\newblock {\em IEEE Transactions on Automatic Control}, 66(6):2582--2597, 2021.

\bibitem{BYGP22}
G.~Belgioioso, P.~Yi, S.~Grammatico, and L.~Pavel.
\newblock Distributed generalized {N}ash equilibrium seeking: An operator-theoretic perspective.
\newblock {\em IEEE Control Systems Magazine}, 42(4):87--102, 2022.

\bibitem{Bem25b}
A.~Bemporad.
\newblock An {L-BFGS-B} approach for linear and nonlinear system identification under $\ell_1$ and group-lasso regularization.
\newblock {\em IEEE Transactions on Automatic Control}, 70(7):4857--4864, 2025.

\bibitem{Bem25}
A.~Bemporad.
\newblock {NashOpt}: A {Python} library for computing generalized {Nash} equilibria and game design.
\newblock 2025.
\newblock arXiv preprint 2512.23636. Code available at \url{https://github.com/bemporad/nashopt}.

\bibitem{BT26}
A.~Bemporad and T.~Tatarenko.
\newblock Learning approximate solutions to multiparametric generalized {Nash} equilibrium problems.
\newblock 2026.
\newblock available on arXiv at \url{https://arxiv.org/abs/2605.28757}. Code available at \url{https://github.com/bemporad/mpfit}.

\bibitem{BT26b}
A.~Bemporad and T.~Tatarenko.
\newblock Solving monotone linear-quadratic generalized {Nash} equilibrium problems via quadratic programming.
\newblock 2026.
\newblock available on arXiv at \url{https://arxiv.org/abs/2608.07336}.

\bibitem{BV04}
S.~Boyd and L.~Vandenberghe.
\newblock {\em Convex Optimization}.
\newblock Cambridge University Press, New York, NY, USA, 2004.

\bibitem{LJWA20}
H.~Le Cadre, P.~Jacquot, C.~Wan, and C.~Alasseur.
\newblock Peer-to-peer electricity market analysis: From variational to generalized {N}ash equilibrium.
\newblock {\em European Journal of Operational Research}, 282:753--771, 2020.

\bibitem{Cappello_TCNS_2021}
D.~Cappello and T.~Mylvaganam.
\newblock Distributed differential games for control of multi-agent systems.
\newblock {\em IEEE Transactions on Control of Network Systems}, 9(2):635--646, 2022.

\bibitem{DBS05}
M.~Diehl, H.G. Bock, and J.P. Schl{\"o}der.
\newblock A real-time iteration scheme for nonlinear optimization in optimal feedback control.
\newblock {\em SIAM Journal on Control and Optimization}, 43(5):1714--1736, 2005.

\bibitem{Dockner_GameEco_2000}
E.~Dockner, S.~J{\o}rgensen, N.~Van Long, and G.~Sorger.
\newblock {\em Differential games in economics and management science}.
\newblock Cambridge University Press, Cambridge, U.K., 2000.

\bibitem{ESF24}
T.~Entesari, S.~Sharifi, and M.~Fazlyab.
\newblock Compositional curvature bounds for deep neural networks.
\newblock In {\em International conference on machine learning}, pages 12527--12546. PMLR, 2024.

\bibitem{FK09}
F.~Facchinei and C.~Kanzow.
\newblock Penalty methods for the solution of generalized {Nash} equilibrium problems (with complete test problems).
\newblock Technical report, Institute of Mathematics, University of W{\"u}rzburg, 2009.

\bibitem{facchinei2010generalized}
F.~Facchinei and C.~Kanzow.
\newblock Generalized {N}ash equilibrium problems.
\newblock {\em Annals of Operations Research}, 175(1):177--211, 2010.

\bibitem{FP03}
F.~Facchinei and J.-S. Pang.
\newblock {\em Finite-dimensional variational inequalities and complementarity problems}.
\newblock Springer, 2003.

\bibitem{goktas2023generative}
D.~Goktas, D.C. Parkes, I.~Gemp, L.~Marris, G.~Piliouras, R.~Elie, G.~Lever, and A.~Tacchetti.
\newblock Generative adversarial equilibrium solvers.
\newblock {\em arXiv preprint arXiv:2302.06607}, 2023.

\bibitem{Hall_CDC_2022}
S.~Hall, G.~Belgioioso, D.~Liao-McPherson, and F.~Dorfler.
\newblock Receding horizon games with coupling constraints for demand-side management.
\newblock In {\em IEEE 61st Conference on Decision and Control (CDC)}, pages 3795--3800, 2022.

\bibitem{han2024noregret}
M.~Han, F.~Zhang, and Y.~Chen.
\newblock No-regret learning of {N}ash equilibrium for black-box games via {G}aussian processes.
\newblock In {\em Proceedings of the Fortieth Conference on Uncertainty in Artificial Intelligence}, pages 1541--1557, 2024.
\newblock arXiv:2405.08318.

\bibitem{JM21}
D.R. Jones and J.R.R.A. Martins.
\newblock The {DIRECT} algorithm: 25 years later.
\newblock {\em Journal of Global Optimization}, 79(3):521--566, 2021.

\bibitem{JSW98}
D.R. Jones, M.~Schonlau, and W.J. Matthias.
\newblock Efficient global optimization of expensive black-box functions.
\newblock {\em Journal of Global Optimization}, 13(4):455--492, 1998.

\bibitem{Kor76}
G.M. Korpelevich.
\newblock The extragradient method for finding saddle points and other problems.
\newblock {\em Matecon}, 12:747--756, 1976.

\bibitem{KLL22}
G.~Kotsalis, G.~Lan, and T.~Li.
\newblock Simple and optimal methods for stochastic variational inequalities, {I}: Operator extrapolation.
\newblock {\em SIAM Journal on Optimization}, 32(3):2041--2073, 2022.

\bibitem{krupa2026learning}
P.~Krupa and A.~Bemporad.
\newblock Learning generalized {Nash} equilibria from pairwise preferences.
\newblock {\em IEEE Control Systems Letters and CDC'26}, 10:1045--1050.

\bibitem{Lju99}
L.~Ljung.
\newblock {\em System Identification : Theory for the User}.
\newblock Prentice Hall, 2 edition, 1999.

\bibitem{MSA16}
Y.~Mao, M.~Szmuk, and B.~A{\c{c}}{\i}kme{\c{s}}e.
\newblock Successive convexification of non-convex optimal control problems and its convergence properties.
\newblock In {\em IEEE 55th Conference on Decision and Control}, pages 3636--3641, 2016.

\bibitem{NS11}
Y.~Nesterov and L.~Scrimali.
\newblock Solving strongly monotone variational and quasi-variational inequalities.
\newblock {\em Discrete and Continuous Dynamical Systems}, 31(4):1383--1396, 2011.

\bibitem{SBB25}
M.~Schaller, A.~Bemporad, and S.~Boyd.
\newblock Learning parametric convex functions.
\newblock 2025.
\newblock \url{http://arxiv.org/abs/2506.04183}.

\bibitem{SF20}
S.~Singla and S.~Feizi.
\newblock Second-order provable defenses against adversarial attacks.
\newblock In {\em International conference on machine learning}, pages 8981--8991. PMLR, 2020.

\bibitem{TatKam25}
T.~Tatarenko and M.~Kamgarpour.
\newblock Convergence rate of payoff-based generalized nash equilibrium learning.
\newblock {\em European Journal of Control}, 86:101372, 2025.
\newblock Special Issue on the European Control Conference 2025.

\bibitem{TatNed25}
T.~Tatarenko and A.~Nedich.
\newblock Fast distributed {Nash} equilibrium seeking in monotone games.
\newblock {\em arXiv, 2507.11703}, 2025.

\bibitem{TM24}
K.~Tracy and Z.~Manchester.
\newblock On the differentiability of the primal-dual interior-point method.
\newblock {\em arXiv preprint 2406.11749}, 2024.
\newblock \url{https://github.com/qpax-solver/qpax}.

\bibitem{WLMCMWW21}
Z.~Wang, F.~Liu, Z.~Ma, Y.~Chen, M.~Jia, W.~Wei, and Q.~Wu.
\newblock Distributed generalized {N}ash equilibrium seeking for energy sharing games in prosumers.
\newblock {\em IEEE Transactions on Power Systems}, 36(5):3973--3986, 2021.

\end{thebibliography}
\end{document}